\theoremstyle{plain}
\newtheorem{thm}{Theorem}[section]
\newtheorem{cor}[thm]{Corollary}
\newtheorem{lem}[thm]{Lemma}
\newtheorem{prop}[thm]{Proposition}
\theoremstyle{definition}
\newtheorem{defn}[thm]{Definition}
\theoremstyle{remark}
\newtheorem{rem}[thm]{Remark}
\newtheorem{exmp}[thm]{Example}
\newcommand{\Z}{{\mathbb Z}}
\newcommand{\Q}{{\mathbb Q}}
\newcommand{\T}{{\mathcal{T}}}
\newcommand{\C}{\mathcal{C}}
\newcommand{\M}{\mathcal{M}}
\newcommand{\map}{\mathop{\textrm{\rm map}}}
\newcommand{\Hom}{\mathop{\textrm{\rm Hom}}}
\newcommand{\Ext}{\mathop{\textrm{\rm Ext}}}
\newcommand{\Cell}{\mathit{Cell}}
\numberwithin{equation}{section}
\begin{document}
\title{Cellularization of structures in stable homotopy categories}
\author{Javier J.~Guti\'errez}
\thanks{The author was supported by the MEC-FEDER grants MTM2007-63277 and MTM2010-15831, and by the
Generalitat de Catalunya as a member of the team 2009~SGR~119.}

\address{Departament d'\`Algebra i Geometria, Universitat de Barcelona, Gran Via de les Corts Catalanes 585, 08007 Barcelona, Spain}
\email{javier.gutierrez.math@gmail.com}

\keywords{Cellularization; nullification; triangulated categories;
stable homotopy categories; ring spectrum; module spectrum}
\subjclass[2000]{Primary: 55P60; Secondary: 18E30, 55P42}
\begin{abstract}
We describe the formal properties of cellularization functors in triangulated categories and study the preservation of ring and module structures under these functors in stable homotopy categories in the sense of Hovey, Palmieri and Strickland, such as the homotopy category of spectra or the derived category of a commutative ring. We prove that cellularization functors preserve modules over connective rings but they do not preserve rings in general (even if the ring is connective or the cellularization functor is triangulated). As an application of these results, we describe the cellularizations of Eilenberg-Mac\,Lane spectra and compute all acyclizations in the sense of Bousfield of the integral Eilenberg-Mac\,Lane spectrum.

\end{abstract}
\maketitle

\section{Introduction}
Cellularization functors were first described in homotopy theory
by Farjoun in the category of pointed simplicial
sets~\cite{Far96}. Given two pointed simplicial sets $A$ and $X$,
one can construct in a functorial way another space $\Cell_A X$,
called the \emph{$A$\nobreakdash-cellularization} of $X$, that
contains the information on $X$ that can be ``built up'' from $A$.
A pointed map $f\colon X\rightarrow Y$ of fibrant simplicial sets
is an \emph{$A$-cellular equivalence} if the induced map
$$
f_*\colon\map\nolimits_*(A,X)\longrightarrow\map\nolimits_*(A,Y)
$$
is a weak equivalence. The simplicial sets of the form $\Cell_A X$
are called \emph{$A$-cellular}, and they form the smallest class
of spaces that contains $A$ and it is closed under weak
equivalences and pointed homotopy colimits. For every pointed
simplicial set $A$ there is an $A$-cellularization functor
$\Cell_A$ that is augmented and homotopy idempotent, and it gives
the best possible $A$-cellular approximation in the sense that the
natural map $\Cell_A X\rightarrow X$ is an $A$-cellular
equivalence. Its properties in the category of spaces have been
widely studied; see~\cite{Far96}, \cite{Cha96} and \cite{Cha97}.
Recently, cellularization functors have been also described in
algebraic categories such as that the category of
groups~\cite{FGS07}, the category of abelian groups~\cite{FG09},
\cite{CFGS09},  and the category of modules \cite{GRS09}.

This paper is devoted to the study of these functors in the
framework of triangulated categories. In the first part, we
describe the formal properties of cellularization functors and the
closure properties of cellular objects and cellular equivalences
in this setting.

Let $\T$ be a triangulated category with products and coproducts
and denote by $\Sigma$ the suspension or shift functor. Let $A$ be
any object in $\T$. A morphism $f\colon X\rightarrow Y$ is an
\emph{$A$-cellular equivalence} if the induced map
$$
f_*\colon\T(\Sigma^k A, X)\longrightarrow\T(\Sigma^k A, Y)
$$
is an isomorphism of abelian groups for all $k\ge 0$. An object $Z$ in $\T$ is \emph{$A$-cel\-lu\-lar} if the induced map
$$
g_*\colon \T(\Sigma^k Z, X)\longrightarrow\T(\Sigma^k Z, Y)
$$
is an isomorphism for every $A$-cellular equivalence $g\colon X\rightarrow Y$ and for all $k\ge 0$.

If the category $\T$ has an underlying right proper cellular or right proper combinatorial model, then
for any object $A$ there exists an $A$-cellularization functor $\Cell_A$ (see \cite[Ch.\ 5]{Hir03}, \cite{Bar10}). This functor is augmented and idempotent in $\T$, and assigns to every object $X$ in $\T$ an $A$-cellular object $\Cell_A X$ together with an $A$-cellular equivalence $\Cell_A X\rightarrow X$ that is terminal among morphisms from $A$-cellular objects to~$X$. The full subcategory of $A$-cellular objects is the image of the functor $\Cell_A$ and it is closed under cofibres, retracts and coproducts but not under desuspensions, in general.

When $\Cell_A$ is triangulated, i.e., it commutes with $\Sigma$,
then $A$-cellular objects are also closed under fibres and
extensions, and for every $X$ in $\T$ there is a triangle
$$
\Cell_A X\longrightarrow X\longrightarrow P_A X\longrightarrow \Sigma\Cell_A X,
$$
where $P_A$ denotes the $A$-nullification functor (see Definition~\ref{defnull}).
In the homotopy category of spectra, Bousfield's $E_*$\nobreakdash-acyclizations~\cite{Bou79a} and $[E,-]_*$\nobreakdash-coloca\-li\-za\-tions~\cite{Bou79b} are examples of functors with all these closure properties.

In the general case, we show that choosing a fiber $F_A X$ of the nullification map $X\rightarrow P_A X$ for every $X$, and using the universal  property of the localization functor $P_A$, we obtain an augmented idempotent functor $F_A$ and a triangle
$$
F_A X\longrightarrow X\longrightarrow P_A X\longrightarrow \Sigma F_A X
$$
for every $X$ in~$\T$. Although in general $F_A$ is not $\Cell_A$, it is often closely related to it.
In the homotopy category of spectra, Chorny has shown that assuming certain large-cardinal axiom (Vop\v{e}nka's principle), $F_A$ is always of the form $\Cell_E X$ for some spectrum $E$. In the category of
spaces the same is true without assuming Vop\v{e}nka's principle, and an explicit construction of $E$ can be found in \cite{CPS04}. Here we prove that if $\T(\Sigma^{-1}A, X)=0$ then $F_A X\cong \Cell_{\Sigma^{-1}A}X$. We also give a characterization of the cellularization and nullification functors that fit into a triangle of the form
$$
\Cell_{A} X\longrightarrow X\longrightarrow P_{A} X\longrightarrow \Sigma \Cell_A X.
$$
If this is the case, then the full subcategories of $A$-cellular objects and $\Sigma A$\nobreakdash-null objects define, in a canonical way, a $t$-structure in the sense of \cite{BBD82} on the triangulated category $\T$, whose heart is trivial if either $P_A$ or $\Cell_A$ are triangulated.

In the second part of the paper we study the preservation of ring and module structures by cellularization functors in \emph{stable homotopy categories}~\cite{HPS97}. These are triangulated categories with a compatible closed symmetric monoidal structure. Examples of stable homotopy categories are the homotopy category of spectra and the derived category of a commutative ring with unit.

If $\T$ is a connective monogenic stable homotopy category, that is,
the unit $S$ of the monoidal structure is a small generator and
$\T(\Sigma^k S, X)=0$ for $k< 0$, we prove that cellularization
functors preserve modules over rings if either the ring is
connective or the functor is triangulated. However, rings are not
preserved in general by cellularization functors, even if the ring
is connective or the functor is triangulated. In~\ref{acycex} we exhibit an example in the homotopy category of spectra of a triangulated cellularization functor that does not send the ring spectrum $H\Z$ to a ring spectrum.  In Section~\ref{cellringsmods} we give
sufficient conditions for cellularization functors to preserve
ring objects.

As a consequence of our results on the cellularization of modules, we prove that cellularization functors in the homotopy category of
spectra preserve stable GEMs and that the cellularization of an Eilenberg-Mac\,Lane
spectrum $\Sigma^k HG$ is either zero or it has at most two nonzero homotopy groups in dimensions $k$ and $k-1$.
%Similar results were obtained in \cite{CG05} for $f$-localization functors.
Finally, we study in more detail acyclizations in the sense of Bousfield~\cite{Bou79a} of Eilenberg-Mac\,Lane spectra and compute some particular examples.

\section{Cellularization and nullification in triangulated categories}

Let $\C$ be any category. A functor $C\colon \C\rightarrow\C$ is called \emph{augmented} if it is
equipped with a natural transformation $\mu\colon C\rightarrow {\rm Id}$. A functor $L\colon\C\rightarrow\C$ is called
\emph{coaugmented} if it is equipped with a natural transformation $\eta\colon {\rm Id}\rightarrow L$.
An augmented (resp.\ coaugmented) functor is \emph{idempotent} if $\mu_{CX}=C\mu_X$ (resp.\ $\eta_{LX}=L\eta_X$) and  $\mu_{CX}$ (resp.\ $\eta_{LX}$) is an isomorphism for every $X$ in $\C$.

An augmented idempotent functor $C$ is also called a
\emph{colocalization functor}. The closure under isomorphisms of
the objects in the image of $C$ form the class of
\emph{$C$\nobreakdash-colocal objects} and the maps $f$ such that $C(f)$ is an
isomorphism are called \emph{$C$-colocal equivalences}. Similarly,
a coaugmented idempotent functor is called a \emph{localization
functor}. The closure under isomorphisms of the objects in the
image of~$L$ are the \emph{$L$-local objects} and the maps $f$
such that $L(f)$ is an isomorphism are called \emph{$L$-local
equivalences}.

Cellularization and nullification functors are particular examples of augmented and coaugmented idempotent functors, respectively. In this section, we define these functors and describe their formal properties in the context of triangulated categories.

A \emph{triangulated category} is an additive category $\T$ with an invertible endofunctor $\Sigma\colon \T\rightarrow \T$ together with
a collection of diagrams of the form
$$
X\longrightarrow Y\longrightarrow Z\longrightarrow \Sigma X
$$
called \emph{distinguished triangles} that satisfy certain axioms
(see for example \cite{Ver77}, \cite{Nee01}). We will refer to
distinguished triangles simply as \emph{triangles}. Given a
triangle
$$
\Sigma^{-1}Z\longrightarrow X\stackrel{f}{\longrightarrow} Y\longrightarrow Z,
$$
we say that $\Sigma^{-1}Z$ is the \emph{fibre} of $f$ and $Z$ is the \emph{cofibre} of $f$.
For any two objects $X$ and $Y$ in a triangulated category $\T$, we will denote by $\T(X,Y)$ the abelian group of morphisms from $X$ to $Y$. Note that since any triangulated category is additive it has a zero object denoted by 0.

Let $\mathcal{L}$ be a full subcategory of $\T$ and let $X\rightarrow Y\rightarrow Z\rightarrow \Sigma X$ be any triangle in~$\T$. We say that $\mathcal{L}$ is:
\begin{itemize}
\item[{\rm (i)}] \emph{Closed under fibres} if when $Y$ and $Z$ are in $\mathcal{L}$ then $X$ is also in $\mathcal{L}$.
\item[{\rm (ii)}] \emph{Closed under cofibres} if when $X$ and $Y$ are in $\mathcal{L}$ then $Z$ is also in $\mathcal{L}$.
\item[{\rm (iii)}] \emph{Closed under extensions} if when $X$ and $Z$ are in $\mathcal{L}$ then $Y$ is also in $\mathcal{L}$.
\end{itemize}
Similarly, if $\mathcal{S}$ is a class of morphisms in $\T$ and
$$
\xymatrix{
X\ar[r]\ar[d]_{f} & Y\ar[r]\ar[d]_{g} & Z\ar[d]_{h} \ar[r]& \Sigma X \ar[d]_{\Sigma f} \\
X' \ar[r] & Y' \ar[r] & Z'\ar[r] & \Sigma X'
}
$$
is any morphism of triangles in $\T$, we say that $\mathcal{S}$ is:
\begin{itemize}
\item[{\rm (i)}] \emph{Closed under fibres} if when $g$ and $h$ are in $\mathcal{S}$ then $f$ is also in $\mathcal{S}$.
\item[{\rm (ii)}] \emph{Closed under cofibres} if when $f$ and $g$ are in $\mathcal{S}$ then $h$ is also in $\mathcal{S}$.
\item[{\rm (iii)}] \emph{Closed under extensions} if when $f$ and $h$ are in $\mathcal{S}$ then $g$ is also in $\mathcal{S}$.
\end{itemize}

\begin{defn}
Let $\T$ be any triangulated category and let $A$ be any object in $\T$.
\begin{itemize}
\item[{\rm (i)}] A morphism $f\colon X\rightarrow Y$ is an \emph{$A$-cellular equivalence} if the induced map
$$
f_*\colon \T(\Sigma^k A, X)\longrightarrow \T(\Sigma^k A, Y)
$$
is an isomorphism of abelian groups for all $k\ge 0$.
\item[{\rm (ii)}] An object $Z$ is \emph{$A$-cellular} if the induced map
$$
g_*\colon \T(\Sigma^k Z, X)\longrightarrow \T(\Sigma^k Z, Y)
$$
is an isomorphism for every $A$-cellular equivalence $g\colon X\rightarrow Y$ and all $k\ge 0$.
\item[{\rm (iii)}] An \emph{$A$-cellularization} of $X$ is an $A$-cellular equivalence
$\widetilde{X}\rightarrow X$, where $\widetilde{X}$ is $A$-cellular.
\end{itemize}
\label{defcell} An \emph{$A$-cellularization functor} is a
colocalization functor $(\Cell_A, c)$ such that for every object
$X$ in $\T$ the natural morphism $c_X\colon \Cell_A X\rightarrow
X$ is an $A$-cellularization.
\end{defn}

\begin{defn}
Let $\T$ be any triangulated category and let $A$ be any object in $\T$.
\begin{itemize}
\item[{\rm (i)}] An object $X$ is \emph{$A$-null} if $\T(\Sigma^k A, X)=0$ for every $k\ge 0$.
\item[{\rm (ii)}] A morphism $f\colon X\rightarrow Y$ is an \emph{$A$-null equivalence} if the induced map
$$
f^*\colon \T(\Sigma^k Y,Z)\longrightarrow \T(\Sigma^k X, Z)
$$
is an isomorphism of abelian groups for every $A$-null object $Z$ and all $k\ge 0$.
\item[{\rm (iii)}] An \emph{$A$-nullification} of $X$ is an $A$-null equivalence
$X\rightarrow \widehat{X}$, where $\widehat{X}$ is $A$-null.
\end{itemize}
\label{defnull}
An \emph{$A$-nullification functor} is a localization functor $(P_A, l)$ such that for every object $X$ in $\T$, the
morphism $l_X\colon X\rightarrow P_A X$ is an $A$-nullification.
\end{defn}

We will assume that for every object $A$ in $\T$ there exist the
$A$\nobreakdash-cellu\-la\-ri\-za\-tion functor $\Cell_A$ and the
$A$-nullification functor $P_A$. This is guaranteed if the
triangulated category admits a suitable model where we can
construct cellularization and nullification functors. There are
two main classes of model categories where localizations and
colocalizations are always known to exist. These are the proper
cellular model categories~\cite[Theorem 5.1.1 and Theorem
4.1.1]{Hir03} and the proper combinatorial model
categories~\cite{Bar10}.
Hence, for us $\T={\rm Ho}(\M)$ will be the homotopy category of a stable model category $\M$ belonging to either of these two classes.

Our main examples are when $\M$ is the model category of symmetric spectra \cite{HSS00} or $\M$ is the model category of (unbounded) chain complexes
of $R$-modules, where $R$ is a commutative ring with unit \cite{Hov99}. In the first case $\T={\rm Ho}^s$ is the ordinary stable homotopy category, and in the second case $\T=\mathcal{D}(R)$ is the derived category of~$R$.

The cellularization map is characterized by two universal properties:
\begin{itemize}
\item[{\rm (i)}] If $f\colon Y\rightarrow X$ is an $A$-cellular equivalence, then there exists a unique $g\colon \Cell_A X\rightarrow Y$
such that the following diagram commutes in $\T$:
$$
\xymatrix{
\Cell_A X \ar[dr]_{g} \ar[r]^-{c_X} & X \\
 & Y.\ar[u]_f
}
$$
\item[{\rm (ii)}] If $f\colon Z\rightarrow X$ is a morphism where $Z$ is $A$-cellular, then there exists a unique $h\colon Z\rightarrow \Cell_A X$
such that the following diagram commutes in $\T$:
$$
\xymatrix{
\Cell_A X  \ar[r]^-{c_X} & X \\
 & Z.\ar[ul]^{h}\ar[u]_f
}
$$
\end{itemize}
The nullification map is also characterized by two universal properties that are stated dually.

From these universal properties, it follows that the $A$-cellular objects are those $X$ such that $\Cell_A X\cong X$,  and the $A$-cellular equivalences are precisely those morphisms $f$ such that $\Cell_A f$ is an isomorphism. Moreover, if the cellularization of an object exists, then it is unique up to isomorphism,
i.e., if we have an $A$\nobreakdash-cellular equivalence $Y\rightarrow X$ such that $Y$ is $A$\nobreakdash-cellular, then $Y\cong\Cell_A X$. The dual statements hold in the case of nullifications.

Cellular objects and cellular equivalences characterize cellularization functors in the following sense:
if two cellularization functors $\Cell_A$ and $\Cell_B$ have the same class of cellular objects or the same class of cellular equivalences,
then $\Cell_A X\cong \Cell_B X$ for every object $X$ in $\T$. Similarly, null objects and null equivalences characterize nullification
functors.

The following results gather some closure properties of cellular objects, null objects and equivalences. %We view $A$-cellular equivalences and $A$-null equivalences as full subcategories of the category of arrows $\T^{[1]}$.

\begin{lem}
Let $A$ be any object in $\T$. Then the following hold:
\begin{itemize}
\item[{\rm (i)}] The full subcategory of $A$-cellular objects is closed under retracts, cofibres and coproducts, and the full subcategory of $A$-null objects is closed under retracts, fibres, extensions and products.
\item[{\rm (ii)}] The class of $A$-cellular equivalences is closed under fibres and the class of $A$-null equivalences is closed under cofibres and extensions.
\end{itemize}
\label{suseq}
\end{lem}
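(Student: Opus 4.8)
The plan is to prove each closure property by direct manipulation of the defining Hom-functors $\T(\Sigma^k A,-)$ and $\T(-, Z)$ together with the long exact sequences associated to triangles. The key observation throughout is that for any object $W$, the functor $\T(W,-)$ is \emph{homological}: applying it to a triangle $X\to Y\to Z\to \Sigma X$ yields a long exact sequence $\cdots\to\T(W,X)\to\T(W,Y)\to\T(W,Z)\to\T(W,\Sigma X)\to\cdots$, and dually $\T(-,Z)$ is cohomological. I would handle the cellular and null statements in parallel, since they are formally dual.

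\medskip

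For part (i), the null-object statements are the easiest: if $Z$ and $Z'$ are $A$-null and $Z\to Y\to Z'\to\Sigma Z$ is a triangle (extension case), then applying $\T(\Sigma^k A,-)$ gives a long exact sequence with the groups on either side of $\T(\Sigma^k A,Y)$ vanishing for all $k\ge 0$, forcing $\T(\Sigma^k A,Y)=0$; closure under products is immediate from $\T(\Sigma^k A,\prod Z_i)\cong\prod\T(\Sigma^k A,Z_i)$, and closure under retracts follows because a retract of a zero group is zero. Closure under fibres for null objects requires a small care about indices: from $X\to Y\to Z\to\Sigma X$ with $Y,Z$ null one reads off $\T(\Sigma^k A, X)$ from the piece $\T(\Sigma^{k+1}A, Z)\to\T(\Sigma^k A, X)\to\T(\Sigma^k A, Y)$; since both neighbours vanish for $k\ge 0$ (note $k+1\ge 1\ge 0$), $X$ is null. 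The cellular-object statements are dual but with a twist: cellularity is not defined by a vanishing condition but by lifting against all $A$-cellular equivalences, so I would instead use the characterization recorded just before the lemma, namely that the $A$-cellular objects are exactly the objects in the image of $\Cell_A$, equivalently those fixed by $\Cell_A$, and invoke the universal property. Closure of cellular objects under coproducts and cofibres is most cleanly deduced from the fact (stated in the Introduction) that the cellular objects form the smallest class closed under these operations, but for a self-contained argument I would show directly that if $g\colon X\to Y$ is an $A$-cellular equivalence then $g_*\colon\T(\Sigma^k Z,X)\to\T(\Sigma^k Z,Y)$ is iso for the relevant $Z$, again via the five lemma applied to the two long exact sequences coming from a cofibre triangle.

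\medskip

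For part (ii), I would exploit the morphism-of-triangles diagram supplied in the text. Given a morphism of triangles with vertical maps $f,g,h,\Sigma f$, apply $\T(\Sigma^k A,-)$ to both rows to obtain a commuting ladder of long exact sequences, and then apply the five lemma. Concretely, for the cellular-equivalence fibre statement, if $g$ and $h$ are $A$-cellular equivalences then $g_*$ and $h_*$ are isomorphisms in the ladder, and the five lemma forces $f_*$ to be an isomorphism at each degree $k\ge 0$, so $f$ is an $A$-cellular equivalence. The null-equivalence statements are handled identically using the cohomological functor $\T(-,Z)$ for an arbitrary $A$-null $Z$: applying it to the morphism of triangles reverses arrows, and the five lemma on the resulting ladder gives that $g^*$ (cofibre case) or $h^*$ (extension case) is an isomorphism, whence $g$ or $h$ is an $A$-null equivalence.

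\medskip

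The routine part is the bookkeeping of indices and the repeated invocation of the five lemma; the one genuine subtlety—the main obstacle—is that \emph{cellular objects} are defined by a lifting/testing property rather than by vanishing, so they are not as transparently controlled by long exact sequences as null objects are. For the cellular-object closure properties I therefore expect the cleanest route is to work on the level of cellular \emph{equivalences} first (part (ii) for coproducts and cofibres of the testing maps) and then transfer to objects, or else to cite directly the universal property of $\Cell_A$ and the fact that its image is closed under cofibres, retracts and coproducts as already asserted in the Introduction. I would also be careful that the asymmetry between cellular and null statements (e.g.\ cellular objects are \emph{not} closed under fibres, only null objects are) is respected, which is exactly why the degree restriction $k\ge 0$ rather than $k\in\Z$ matters in every application.
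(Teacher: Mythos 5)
Your overall strategy coincides with the paper's proof, which consists precisely of the two long exact sequences of a triangle plus the five lemma, and most of what you outline is correct: the vanishing argument for $A$-null objects (with your index check $k+1\ge 0$ in the fibre case), the contravariant-ladder argument for closure of $A$-cellular objects under cofibres (there the outer terms sit in degrees $k$ and $k+1$, so the restriction $k\ge 0$ is harmless), retracts and coproducts, the fibre case for $A$-cellular equivalences, and the cofibre case for $A$-null equivalences (again only degrees $k$ and $k+1$ occur). Two small remarks: your fallback of citing the Introduction for the closure properties of cellular objects would be circular, since that sentence of the Introduction \emph{is} this lemma (your self-contained five-lemma route is the right one); and your labels at the end are swapped: with the paper's conventions the cofibre case concludes about $h$ and the extension case about $g$.

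The genuine gap is your claim that the extension case for $A$-null equivalences is ``handled identically''. Write out the ladder: applying $\T(-,E)$ for an $A$-null $E$ to the morphism of triangles, the five terms around $g^*\colon \T(\Sigma^k Y',E)\to\T(\Sigma^k Y,E)$ carry the vertical maps $f^*$ in degree $k+1$, $h^*$ in degree $k$, $f^*$ in degree $k$, and $h^*$ in degree $k-1$. At $k=0$ the last one is $h^*\colon \T(\Sigma^{-1}Z',E)\to\T(\Sigma^{-1}Z,E)$, which the hypothesis does not control ($h$ being an $A$-null equivalence only governs degrees $\ge 0$), and you cannot trade the desuspension onto $E$ because $\Sigma E$ need not be $A$-null: null objects are closed under desuspensions only (Corollary~\ref{corolcel}(iii)). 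So your argument gives injectivity of $g^*$ for all $k\ge 0$ and bijectivity for $k\ge 1$, but surjectivity at $k=0$ does not follow. This is not repairable bookkeeping: in ${\rm Ho}^s$ with $A=S$, map the triangle $\Sigma^{-1}H\Z\to\Sigma^{-1}H\Z\to 0\to H\Z$ (identity as first map) to the rotated fibre sequence $\Sigma^{-1}H\Z\to Y'\to S\stackrel{\eta}{\to} H\Z$ of the unit $\eta\colon S\to H\Z$, with identities on the outer terms. Then $f=\mathrm{id}$ and $h\colon 0\to S$ are $S$-null equivalences (for any $S$-null $E$ one has $\T(\Sigma^k S,E)=0$ for $k\ge 0$ by definition), but $g\colon \Sigma^{-1}H\Z\to Y'$ is not: the object $E=\Sigma^{-1}H\Z$ is $S$-null, $\T(\Sigma^{-1}H\Z,E)\cong\T(H\Z,H\Z)\ne 0$, yet $\T(Y',E)=0$ because $Y'$ is connective. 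So for this clause the five-lemma argument you propose cannot be completed (the paper's own one-line proof glosses over exactly the same point), and that part of the statement should be treated with caution rather than proved ``identically''.
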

\begin{proof}
In any triangulated category $\T$, every triangle $X\rightarrow Y\rightarrow Z\rightarrow \Sigma X$ gives rise to two long exact sequences of abelian groups
\begin{gather}\notag
\cdots\longrightarrow \T(\Sigma E, Z)\longrightarrow \T(E, X)\longrightarrow \T(E, Y) \longrightarrow \T(E, Z)\longrightarrow
\T(\Sigma^{-1} E, X)\longrightarrow\cdots \\ \notag
\cdots\longleftarrow \T(\Sigma^{-1} Z, E)\longleftarrow \T(X, E)\longleftarrow \T(Y, E) \longleftarrow \T(Z, E)\longleftarrow
\T(\Sigma X, E)\longleftarrow\cdots
\end{gather}
for any object $E$ in $\T$. It is easy to check that the results follow from the definitions and these long exact sequences together with the five lemma.
\end{proof}

\begin{cor}
Let $A$ be any object in $\T$. Then the following hold:
\begin{itemize}
\item[{\rm (i)}] If $X$ is $A$-cellular, then $\Sigma^k X$ is $A$-cellular and $X$ is $\Sigma^{-k} A$-cellular for all $k\ge 0$,
and $\Sigma^ k X$ is $\Sigma^k A$-cellular for all $k\in\Z$.
\item[{\rm (ii)}] If $f$ is an $A$-cellular equivalence, then $\Sigma^{-k} f$ is an $A$-cellular equivalence and
$f$ is a $\Sigma^k A$-cellular equivalence for all $k\ge 0$, and $\Sigma^k f$ is a $\Sigma^k A$-cellular equivalence for all $k\in\Z$.
\item[{\rm (iii)}] If $X$ is $A$-null, then $\Sigma^k X$ is $A$-null and $X$ is $\Sigma^{-k} A$-null for all $k\le 0$,
and $\Sigma^ k X$ is $\Sigma^k A$-null for all $k\in\Z$.
\item[{\rm (iv)}] If $f$ is an $A$-null equivalence, then $\Sigma^{-k} f$ is an $A$-null equivalence and
$f$ is a $\Sigma^{k} A$-equivalence for all $k\le 0$, and $\Sigma^k f$ is a $\Sigma^k A$-equivalence for all $k\in\Z$.
\end{itemize}
\label{corolcel}
\end{cor}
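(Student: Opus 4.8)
The plan is to reduce every assertion to the single structural fact that the shift $\Sigma$ is an autoequivalence of $\T$, which produces a natural isomorphism
$$
\T(\Sigma^a U,\Sigma^b V)\cong\T(\Sigma^{a-c}U,\Sigma^{b-c}V)
$$
for all integers $a,b,c$ and all objects $U,V$. I would first rephrase the four notions of Definitions~\ref{defcell} and~\ref{defnull} as conditions on a range of integers: a map is an $A$-cellular equivalence precisely when it induces isomorphisms on $\T(\Sigma^m A,-)$ for all $m\ge 0$, an object is $A$-null precisely when $\T(\Sigma^m A,-)$ vanishes on it for all $m\ge 0$, and replacing $A$ by $\Sigma^{\pm k}A$ merely shifts this range to $\{m\ge\mp k\}$.

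The last clause of each of (i)--(iv), where both the object (or morphism) and the test object $A$ are suspended by the same $k\in\Z$, I would dispose of uniformly through the autoequivalence $\Sigma^k$. Since $\T(\Sigma^k U,\Sigma^k V)\cong\T(U,V)$ naturally, the functor $\Sigma^k$ carries $A$-cellular equivalences to $\Sigma^k A$-cellular equivalences, $A$-cellular objects to $\Sigma^k A$-cellular objects, $A$-null objects to $\Sigma^k A$-null objects, and $A$-null equivalences to $\Sigma^k A$-null equivalences, because after cancelling $\Sigma^k$ the defining conditions coincide verbatim. This settles the third clause of all four items at once.

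For the clauses in which only the object or morphism is suspended while $A$ is kept fixed (the first clause of each item), and for the clauses in which $A$ is replaced by $\Sigma^{\pm k}A$ in a \emph{directly defined} notion (the second clause of (ii) and of (iii)), the statements are immediate index manipulations. For instance, $\T(\Sigma^j A,\Sigma^{-k}X)\cong\T(\Sigma^{j+k}A,X)$ shows that for $k\ge 0$ the condition for $\Sigma^{-k}f$ to be an $A$-cellular equivalence only involves the indices $j+k\ge 0$ already controlled by $f$; likewise $f$ is a $\Sigma^k A$-cellular equivalence for $k\ge 0$ because the relevant range $\{m\ge k\}$ sits inside $\{m\ge 0\}$. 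The null-object assertions of (iii) are the same computation with the sign reversed: the hypothesis $k\le 0$ is exactly what keeps the shifted range $\{m\ge -k\}$ inside $\{m\ge 0\}$, so that the vanishing guaranteed by $A$-nullity of $X$ still applies.

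The only clauses that genuinely use the defining test property are the second clause of (i) and of (iv), and here the argument is a comparison of \emph{classes}. For $k\ge 0$, a $\Sigma^{-k}A$-cellular equivalence inverts $\T(\Sigma^m A,-)$ on the larger range $\{m\ge -k\}\supseteq\{m\ge 0\}$, hence is in particular an $A$-cellular equivalence; thus the $\Sigma^{-k}A$-cellular equivalences form a subclass of the $A$-cellular equivalences, and an $A$-cellular object, which inverts the larger class, a fortiori inverts the smaller one and is therefore $\Sigma^{-k}A$-cellular. Dually, for $k\le 0$ a $\Sigma^k A$-null object kills $\T(\Sigma^m A,-)$ on $\{m\ge k\}\supseteq\{m\ge 0\}$ and is in particular $A$-null, so an $A$-null equivalence, tested against the larger class of $A$-null objects, is tested against every $\Sigma^k A$-null object as well and is thus a $\Sigma^k A$-null equivalence. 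The main thing to watch throughout is the bookkeeping of the inequalities and the resulting direction of these class inclusions; once the sign conventions ($k\ge 0$ for the cellular statements, $k\le 0$ for the null statements) are pinned down, each assertion follows without further calculation.
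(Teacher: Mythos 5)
Your proof is correct: the sign bookkeeping is right in every clause, the cancellation argument via the autoequivalence $\Sigma^k$ does settle all the third clauses, and the class-inclusion arguments are exactly the right mechanism for the two test-defined clauses (i)(b) and (iv)(b), since $\Sigma^{-k}A$-cellular equivalences (for $k\ge 0$) form a subclass of the $A$-cellular equivalences and $\Sigma^{k}A$-null objects (for $k\le 0$) form a subclass of the $A$-null objects. However, your route is genuinely different from the paper's. The paper's proof is a one-liner: apply the closure properties of Lemma~\ref{suseq} to the degenerate triangle $X\to X\to 0\to\Sigma X$ and its rotations, so that $\Sigma X$ appears as the cofibre of $X\to 0$ (hence is $A$-cellular when $X$ is), $\Sigma^{-1}X$ appears as the fibre of $0\to X$ (hence is $A$-null when $X$ is), and the statements about morphisms follow from the closure of the equivalence classes under fibres and cofibres applied to the corresponding morphism of degenerate triangles. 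That argument exhibits the corollary as an instance of the general fibre/cofibre closure properties, but it only directly covers the clauses in which $A$ is held fixed; the clauses where $A$ is replaced by $\Sigma^{\pm k}A$ are left as implicit definitional checks --- precisely the index shifts and class inclusions you spell out. Your argument, by contrast, is self-contained: it never invokes Lemma~\ref{suseq}, triangles, or the five lemma, and it makes explicit what the paper leaves to the reader; the price is that it does not display the result as a special case of the closure properties, which is the structural point the paper's citation is making.
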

\begin{proof}
Use the triangle $X\rightarrow X\rightarrow 0\rightarrow \Sigma X$ and Lemma~\ref{suseq}.
\end{proof}

\begin{cor}
If $A$ and $X$ are any two objects in $\T$, then
$$
\Cell_A\Sigma^k X\cong \Sigma^k\Cell_{\Sigma^{-k}A}X \quad\mbox{and}\quad P_A\Sigma^k X\cong \Sigma^kP_{\Sigma^{-k}A}X
$$
for all $k\in\Z$.
\label{comcor}
\end{cor}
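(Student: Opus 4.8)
The plan is to use the uniqueness of cellularizations and nullifications recorded just before Lemma~\ref{suseq}, together with the compatibility of the shift functor with cellular and null objects and equivalences established in Corollary~\ref{corolcel}. Since a cellularization of an object is unique up to isomorphism, to prove the first isomorphism it suffices to exhibit an $A$-cellular equivalence $W\to\Sigma^k X$ whose source $W$ is $A$-cellular; then automatically $W\cong\Cell_A\Sigma^k X$.

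First I would take the cellularization map $c\colon\Cell_{\Sigma^{-k}A}X\to X$, which by definition is a $\Sigma^{-k}A$-cellular equivalence with $\Sigma^{-k}A$-cellular source. Applying the invertible functor $\Sigma^k$ gives a morphism $\Sigma^k c\colon \Sigma^k\Cell_{\Sigma^{-k}A}X\to\Sigma^k X$. By the last clause of Corollary~\ref{corolcel}(ii), applied with $A$ replaced by $\Sigma^{-k}A$, the map $\Sigma^k c$ is a $\Sigma^k(\Sigma^{-k}A)=A$-cellular equivalence; and by the last clause of Corollary~\ref{corolcel}(i), again with $A$ replaced by $\Sigma^{-k}A$, the object $\Sigma^k\Cell_{\Sigma^{-k}A}X$ is $A$-cellular. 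Hence $\Sigma^k c$ is an $A$-cellular equivalence out of an $A$-cellular object, and uniqueness yields $\Sigma^k\Cell_{\Sigma^{-k}A}X\cong\Cell_A\Sigma^k X$.

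The second isomorphism is entirely dual. Starting from the nullification map $l\colon X\to P_{\Sigma^{-k}A}X$, a $\Sigma^{-k}A$-null equivalence with $\Sigma^{-k}A$-null target, I would apply $\Sigma^k$ and invoke the final clauses of Corollary~\ref{corolcel}(iv) and (iii) to see that $\Sigma^k l\colon\Sigma^k X\to\Sigma^k P_{\Sigma^{-k}A}X$ is an $A$-null equivalence with $A$-null target. The dual uniqueness statement for nullifications then gives $\Sigma^k P_{\Sigma^{-k}A}X\cong P_A\Sigma^k X$.

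There is no serious obstacle here: all the required compatibilities between the shift functor and the notions of cellular/null object and equivalence are already packaged in Corollary~\ref{corolcel}, and the isomorphisms follow formally from uniqueness. The only point demanding care is the bookkeeping with indices: one must use precisely the ``for all $k\in\Z$'' clauses of Corollary~\ref{corolcel} rather than the restricted ones for $k\ge 0$ or $k\le 0$, and substitute $\Sigma^{-k}A$ for $A$ so that the composite suspension $\Sigma^k(\Sigma^{-k}A)$ collapses back to $A$.
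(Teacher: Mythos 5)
Your proof is correct and is essentially identical to the paper's: both take the cellularization map $c\colon\Cell_{\Sigma^{-k}A}X\to X$, apply $\Sigma^k$, invoke the ``for all $k\in\Z$'' clauses of Corollary~\ref{corolcel} to see that $\Sigma^k c$ is an $A$-cellular equivalence out of an $A$-cellular object, and conclude by uniqueness of cellularizations, with the nullification statement handled by the dual argument. Your write-up merely makes explicit the clause-by-clause bookkeeping that the paper leaves implicit.
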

\begin{proof}
The map $c\colon\Cell_{\Sigma^{-k}A}X\rightarrow X$ is a $\Sigma^{-k}A$-cellular equivalence, so Corollary~\ref{corolcel} implies that
$\Sigma^k c$ is an $A$\nobreakdash-cellular equivalence and that $\Sigma^k \Cell_{\Sigma^{-k}A}X$ is $A$\nobreakdash-cellular, since
$\Cell_{\Sigma^{-k}A}X$ is $\Sigma^{-k}A$-cellular. Hence, $\Cell_A\Sigma^k X\cong \Sigma^k\Cell_{\Sigma^{-k}A}X$. The nullification part is proved by the same argument.
\end{proof}

\begin{prop}
Let $X\rightarrow Y\rightarrow Z\rightarrow \Sigma X$ be a triangle in $\T$ and $A$ any object in $\T$. If $\Cell_A Z=0$,
then the morphism $X\rightarrow Y$ is an $A$-cellular equivalence. If $P_A X=0$, then the morphism $Y\rightarrow Z$ is
an $A$-null equivalence.
\label{propos02}
\end{prop}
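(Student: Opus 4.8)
The plan is to reduce both statements to the vanishing of suitable morphism groups and then read off the conclusion from the long exact sequences of Lemma~\ref{suseq}.

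First I would translate the hypotheses into vanishing statements. Since the cellularization map $c_Z\colon\Cell_A Z\to Z$ is an $A$-cellular equivalence, the maps $\T(\Sigma^k A,\Cell_A Z)\to\T(\Sigma^k A,Z)$ are isomorphisms for all $k\ge 0$; hence $\Cell_A Z=0$ forces $\T(\Sigma^k A,Z)=0$ for every $k\ge 0$. Dually, the nullification map $l_X\colon X\to P_A X$ is an $A$-null equivalence, so $\T(\Sigma^k P_A X,W)\to\T(\Sigma^k X,W)$ is an isomorphism for every $A$-null object $W$ and all $k\ge 0$; hence $P_A X=0$ forces $\T(\Sigma^k X,W)=0$ for every $A$-null $W$ and all $k\ge 0$.

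For the first claim I would feed the triangle $X\to Y\to Z\to\Sigma X$ into the covariant long exact sequence of Lemma~\ref{suseq} with $E=\Sigma^k A$. There the morphism $\T(\Sigma^k A,X)\to\T(\Sigma^k A,Y)$ is preceded by $\T(\Sigma^{k+1}A,Z)$ and followed by $\T(\Sigma^k A,Z)$. For $k\ge 0$ both neighbouring groups vanish by the previous paragraph (note that $k+1\ge 0$ as well), so exactness shows that $\T(\Sigma^k A,X)\to\T(\Sigma^k A,Y)$ is an isomorphism for all $k\ge 0$; that is, $X\to Y$ is an $A$-cellular equivalence.

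For the second claim I would instead use the contravariant long exact sequence of Lemma~\ref{suseq}, applied to the shifted triangle $\Sigma^k X\to\Sigma^k Y\to\Sigma^k Z\to\Sigma^{k+1}X$ with an arbitrary $A$-null object $W$ in place of $E$. The map $\T(\Sigma^k Z,W)\to\T(\Sigma^k Y,W)$ induced by $Y\to Z$ then has neighbours $\T(\Sigma^{k+1}X,W)$ and $\T(\Sigma^k X,W)$, both of which vanish for $k\ge 0$ by the first paragraph. Exactness gives that this map is an isomorphism for every $A$-null $W$ and all $k\ge 0$, which is exactly the assertion that $Y\to Z$ is an $A$-null equivalence. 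The argument is entirely formal; the only point needing attention is the bookkeeping of suspension shifts, so that the vanishing hypotheses---available only for nonnegative powers of $\Sigma$---indeed cover both neighbouring terms, which they do since the indices $k$ and $k+1$ both stay nonnegative.
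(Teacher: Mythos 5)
Your proof is correct and follows essentially the same route as the paper's: both translate $\Cell_A Z=0$ and $P_A X=0$ into the statements that $0\rightarrow Z$ is an $A$-cellular equivalence and $X\rightarrow 0$ is an $A$-null equivalence, and then read off the conclusion from the two long exact sequences of Lemma~\ref{suseq}, noting that the relevant neighbouring terms vanish for all $k\ge 0$. Your explicit bookkeeping of the suspension indices (using $E=\Sigma^k A$, resp.\ the shifted triangle) just spells out what the paper leaves implicit.
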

\begin{proof}For the first part, use the long exact sequence
$$
\cdots\longrightarrow\T(\Sigma A, X)\longrightarrow \T(A,X)\longrightarrow \T(A,Y)\longrightarrow \T(A,Z)\longrightarrow
\T(\Sigma^{-1} A, X) \longrightarrow\cdots
$$
and the fact that the map $0\rightarrow Z$ is an $A$-cellular equivalence. The second part is proved similarly. If
$E$ is an $A$-null object, then from the long exact sequence
$$
\cdots\longleftarrow \T(\Sigma^{-1} Z, E)\longleftarrow \T(X, E)\longleftarrow \T(Y, E) \longleftarrow \T(Z, E)\longleftarrow
\T(\Sigma X, E)\longleftarrow\cdots
$$
it follows that the map $Y\rightarrow Z$ is an $A$-null equivalence since the map $X\rightarrow 0$ is also an $A$-null
equivalence.
\end{proof}

An additive functor $F\colon \T\rightarrow \T'$ between two triangulated categories is called a \emph{triangulated functor} if there is a natural isomorphism $\phi_X\colon F(\Sigma X)\rightarrow \Sigma (FX)$
for every object $X$ in $\T$ such that for every triangle
$$
X\stackrel{u}{\longrightarrow} Y \stackrel{v}{\longrightarrow} Z\stackrel{w}{\longrightarrow} \Sigma X
$$
in $\T$ the diagram
$$
FX\stackrel{Fu}{\longrightarrow} FY \stackrel{Fv}{\longrightarrow} FZ\stackrel{\phi_X\circ Fw}{\longrightarrow} \Sigma (FX)
$$
is a triangle in $\T'$.

Cellularization and nullification functors are not triangulated in general, since they do not preserve triangles (see Example~\ref{expos}). However, by the universal property of the cellularization map, for every $A$ and $X$ in $\T$, there is a natural map
$$
\Sigma \Cell_A X\longrightarrow \Cell_A\Sigma X,
$$
since the suspension of an $A$-cellular object is again
$A$-cellular. The following theorem provides a characterization of
when this map is an isomorphism. If this is the case, we say that the
functor $\Cell_A$ \emph{commutes with suspension} (or,
equivalently, is triangulated).
\begin{thm}
Let $A$ be any object in $\T$. Then, the following are equivalent:
\begin{itemize}
\item[{\rm (i)}] $\Sigma \Cell_A X\cong\Cell_A\Sigma X$ for every $X$ in $\T$.
\item[{\rm (ii)}] $A$-cellular objects are closed under desuspensions.
\item[{\rm (iii)}] $A$-cellular equivalences are closed under suspensions.
\item[{\rm (iv)}] $\Cell_A X\cong \Cell_{\Sigma^k A}X$ for every $X$ in $\T$ and all $k\in\Z$.
\item[{\rm (v)}] A map $g\colon X\rightarrow Y$ in $\T$ is an $A$-cellular equivalence if the induced map
$$
g_*\colon \T(\Sigma^k A, X)\longrightarrow \T(\Sigma^k A, Y)
$$
is an isomorphism for all $k\in\Z$.
\item[{\rm (vi)}] If $X\rightarrow Y\rightarrow Z\rightarrow \Sigma X$ is a triangle in $\T$, then so is $\Cell_A X\rightarrow \Cell_A Y\rightarrow \Cell_A Z\rightarrow \Cell_A\Sigma X$.
\end{itemize}
\label{largo}
\end{thm}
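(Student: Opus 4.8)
The plan is to prove the chain $\mathrm{(i)}\Rightarrow\mathrm{(ii)}\Rightarrow\mathrm{(iii)}\Rightarrow\mathrm{(v)}\Rightarrow\mathrm{(i)}$, then $\mathrm{(ii)}\Leftrightarrow\mathrm{(iv)}$, and finally $\mathrm{(v)}\Rightarrow\mathrm{(vi)}\Rightarrow\mathrm{(i)}$, which makes all six conditions equivalent. The guiding principle is that (i), (ii) and (iv) each express that the class of $A$-cellular objects is closed under desuspension, whereas (iii) and (v) express that the $A$-cellular equivalences are exactly the maps inducing isomorphisms on $\T(\Sigma^k A,-)$ for \emph{all} $k\in\Z$. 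The two basic tools are Corollary~\ref{comcor}, which gives $\Cell_A\Sigma^k X\cong\Sigma^k\Cell_{\Sigma^{-k}A}X$, and the fact (recorded before Lemma~\ref{suseq}) that a cellularization functor is determined by its class of cellular objects, equivalently of cellular equivalences. First I would extract from Corollary~\ref{corolcel}(i) the identity that the $\Sigma^k A$-cellular objects are precisely the $k$-fold suspensions of the $A$-cellular objects, and dually that a map lies in $\T(\Sigma^{k}A,-)$-equivalences through the shift $\T(\Sigma^k A,\Sigma^m X)\cong\T(\Sigma^{k-m}A,X)$.

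For the soft implications: $\mathrm{(i)}\Rightarrow\mathrm{(ii)}$ because applying the natural isomorphism of (i) to $\Sigma^{-1}X$, for $X$ an $A$-cellular object, shows that $c_{\Sigma^{-1}X}$ is an isomorphism, so $\Sigma^{-1}X$ is $A$-cellular. For $\mathrm{(ii)}\Rightarrow\mathrm{(iii)}$, closure under desuspension makes $\Sigma^{-1}A$ cellular, so any $A$-cellular equivalence $g$ induces an isomorphism on $\T(\Sigma^{-1}A,-)$; together with the shift isomorphism this gives that $\Sigma g$ is again an $A$-cellular equivalence. For $\mathrm{(iii)}\Rightarrow\mathrm{(v)}$, iterating closure under suspension and using $\T(\Sigma^k A,\Sigma^m X)\cong\T(\Sigma^{k-m}A,X)$ propagates the isomorphisms to all negative shifts. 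For $\mathrm{(v)}\Rightarrow\mathrm{(i)}$, the map $c_X\colon\Cell_A X\to X$ is an $A$-cellular equivalence, hence by (v) induces isomorphisms on $\T(\Sigma^k A,-)$ for all $k\in\Z$; then $\Sigma c_X$ does too and is therefore an $A$-cellular equivalence by (v). Since $\Sigma\Cell_A X$ is $A$-cellular, uniqueness of cellularization yields $\Sigma\Cell_A X\cong\Cell_A\Sigma X$. Finally $\mathrm{(ii)}\Leftrightarrow\mathrm{(iv)}$ follows from the identity $\mathcal{C}_{\Sigma^k A}=\Sigma^k\mathcal{C}_A$ (with $\mathcal{C}_B$ the class of $B$-cellular objects) combined with the characterization of $\Cell$ by its cellular objects, since closure under desuspension makes $\Sigma^k\mathcal{C}_A=\mathcal{C}_A$ for all $k$.

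The substantive step is $\mathrm{(v)}\Rightarrow\mathrm{(vi)}$. Given a triangle $X\stackrel{u}{\to} Y\stackrel{v}{\to} Z\to\Sigma X$, I would complete $\Cell_A u$ to a triangle $\Cell_A X\to\Cell_A Y\to C\to\Sigma\Cell_A X$, whose third term $C$ is $A$-cellular because cellular objects are closed under cofibres (Lemma~\ref{suseq}). The completion axiom extends the augmentations $c_X,c_Y$ to a morphism of triangles with a third vertical map $C\to Z$; applying $\T(\Sigma^k A,-)$ and the five lemma then shows $C\to Z$ is an $A$-cellular equivalence, so $C\cong\Cell_A Z$ by uniqueness, and one identifies the maps as $\Cell_A u$, $\Cell_A v$ and $c_Z$ via the universal property of $c_Z$. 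The hard part, and the precise place the hypothesis enters, is that the five lemma requires $c_X$ to induce an isomorphism on $\T(\Sigma^{-1}A,-)$ as well: the relevant term is $\T(\Sigma^{0}A,\Sigma\Cell_A X)\cong\T(\Sigma^{-1}A,\Cell_A X)$, which is exactly the negative-shift isomorphism guaranteed by (v) but not by the bare definition of $A$-cellular equivalence. Rewriting $\Sigma\Cell_A X$ as $\Cell_A\Sigma X$ through (i) produces the triangle in (vi). The converse $\mathrm{(vi)}\Rightarrow\mathrm{(i)}$ is immediate: applying (vi) to $X\stackrel{\mathrm{id}}{\to}X\to0\to\Sigma X$ gives a triangle $\Cell_A X\stackrel{\mathrm{id}}{\to}\Cell_A X\to0\to\Cell_A\Sigma X$, which forces $\Cell_A\Sigma X\cong\Sigma\Cell_A X$.
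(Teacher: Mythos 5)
Your proof is correct and establishes all six equivalences, but it routes the one substantive implication differently from the paper. The paper proves (i) $\Rightarrow$ (vi) by a rotation trick: given $X\to Y\to Z\to \Sigma X$, it forms a fibre $F$ of $\Cell_A Z\to \Cell_A\Sigma X$, uses (i) to identify $\Sigma^{-1}\Cell_A Z\cong \Cell_A\Sigma^{-1}Z$ and $\Sigma^{-1}\Cell_A\Sigma X\cong \Cell_A X$, and then concludes from Lemma~\ref{suseq} that $F$ is $A$-cellular (a cofibre of a map of $A$-cellular objects) and that $F\to Y$ is an $A$-cellular equivalence (cellular equivalences are closed under fibres), whence $F\cong \Cell_A Y$; the point of rotating is that the ``closed under fibres'' instance of the five lemma involves only non-negative shifts of $A$, so no information beyond what (i) encodes is ever needed. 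You instead prove (v) $\Rightarrow$ (vi) by forming the cofibre $C$ of $\Cell_A X\to \Cell_A Y$ and running the five lemma on the comparison map $C\to Z$ given by the completion axiom; there the term $\T(A,\Sigma\Cell_A X)\cong \T(\Sigma^{-1}A,\Cell_A X)$ genuinely requires isomorphisms in negative shifts, which is exactly what (v) supplies. Your route is more direct and pinpoints precisely why (vi) fails for general $A$ (cellular equivalences are not closed under cofibres); the paper's route is slightly slicker because Lemma~\ref{suseq} already packages its five-lemma argument. Your handling of the remaining equivalences --- the cycle (i) $\Rightarrow$ (ii) $\Rightarrow$ (iii) $\Rightarrow$ (v) $\Rightarrow$ (i), plus (ii) $\Leftrightarrow$ (iv) via the identity $\mathcal{C}_{\Sigma^k A}=\Sigma^k\mathcal{C}_A$ on classes of cellular objects --- differs only cosmetically from the paper's (which goes (iii) $\Rightarrow$ (iv) $\Rightarrow$ (i) using classes of cellular equivalences and Corollary~\ref{comcor}), and it usefully supplies details the paper dismisses as immediate from the definitions. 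One remark: as literally stated, (v) is vacuous (isomorphisms for all $k\in\Z$ trivially imply the defining condition for $k\ge 0$); both you and the paper implicitly, and correctly, read (v) as the biconditional that the $A$-cellular equivalences are exactly the maps inducing isomorphisms on $\T(\Sigma^k A,-)$ for all $k\in\Z$, and your steps (iii) $\Rightarrow$ (v) and (v) $\Rightarrow$ (i) use both directions of that reading.
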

\begin{proof}
The equivalence between (i), (ii) and (iii) is a direct consequence of the definitions. From (iii) it follows that the class of $A$-cellular equivalences and the class of $\Sigma^k A$-cellular equivalences coincide, implying (iv). Combining (iv) and Corollary~\ref{comcor} we deduce (i). Part (v) and (iii) are equivalent. To get (vi) from~(i), consider the following diagram
associated to a triangle $X\rightarrow Y\rightarrow Z$:
$$
\xymatrix{
\Sigma^{-1} \Cell_A Z\ar[r]\ar[d] & \Sigma^{-1}\Cell_A \Sigma X\ar[r]\ar[d] & F\ar[r]\ar[d] & \Cell_A Z \ar[r]\ar[d] & \Cell_A \Sigma X\ar[d]\\
\Sigma^{-1} Z\ar[r] & X\ar[r] & Y\ar[r] & Z \ar[r] & \Sigma X,
}
$$
where $F$ is a fibre of the map $\Cell_A Z\rightarrow \Cell_A \Sigma X$. Now, by part (i) we have that $\Sigma^{-1} \Cell_A Z\cong \Cell_A\Sigma^{-1} Z$ and
$\Sigma^{-1}\Cell_A \Sigma X\cong \Cell_A X$. Using Lemma \ref{suseq} we infer that $F$ is $A$-cellular and the map $F\rightarrow Y$
is an $A$-cellular equivalence, so $F\cong \Cell_A Y$. Finally, we can use the  triangle
$X\rightarrow X\rightarrow 0\rightarrow \Sigma X$ to deduce (i) from~(vi).
\end{proof}

\begin{rem}
Theorem \ref{largo} can be also stated in a dual form in terms of nullification functors (cf. \cite[Theorem 2.7]{CG05}).
\end{rem}

\section{Relation between cellularization and nullification}
For any given object $A$ in $\T$, there is a close relationship between the $A$-nulli\-fi\-ca\-tion functor and the
$A$-cellularization functor. While the first one ``kills'' all the $A$-information from a given object $X$, the second
one extracts the information from $X$ that can be ``built up'' from $A$. The following lemma illustrates this fact.
\begin{lem}
For any objects $A$ and $X$ in $\T$, we have that $\Cell_A P_A X\cong 0$ and $P_A \Cell_A X\cong 0$.
\label{lemtriv}
\end{lem}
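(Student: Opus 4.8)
For any objects $A$ and $X$ in $\T$, we have $\Cell_A P_A X\cong 0$ and $P_A \Cell_A X\cong 0$.

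The plan is to prove each isomorphism by showing that the object in question has the defining property of a zero object, exploiting the interplay between the two universal properties. For the first isomorphism, $\Cell_A P_A X \cong 0$, I would argue that $P_A X$ is itself $A$-null, so that $\T(\Sigma^k A, P_A X) = 0$ for all $k \ge 0$ by Definition~\ref{defnull}(i). Then I would use the universal property of the cellularization map to conclude that any map from an $A$-cellular object into $P_A X$ must be zero. More precisely, I want to show that the identity map on $\Cell_A P_A X$ factors as the zero map; equivalently, I would show that the augmentation $c_{P_A X}\colon \Cell_A P_A X \to P_A X$ is itself the zero morphism, and then invoke the fact that $c_{P_A X}$ is an $A$-cellular equivalence between $A$-cellular objects, hence an isomorphism, forcing $\Cell_A P_A X \cong 0$.

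For the second isomorphism, $P_A \Cell_A X \cong 0$, I would dualize this reasoning. The cellularization map $c_X\colon \Cell_A X \to X$ is an $A$-cellular equivalence, and I want to identify its effect after applying $P_A$. The cleanest route is to observe that an $A$-null object $Z$ receives no nontrivial maps from an $A$-cellular object after nullification: since $\Cell_A X$ is $A$-cellular, I expect $\T(\Sigma^k \Cell_A X, Z) = 0$ for every $A$-null $Z$, which I can extract by first noting that the map $0 \to Z$ is an $A$-null equivalence and tracing through the relevant universal property. Then the nullification $P_A \Cell_A X$ is an $A$-null object that represents the zero functor on $A$-null objects, and by the dual uniqueness statement for nullifications (stated in the excerpt just before Lemma~\ref{suseq}) it must be isomorphic to $0$.

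A slicker unified approach, which I would likely prefer, is to verify directly that $0 \to P_A X$ is an $A$-cellular equivalence and that $\Cell_A X \to 0$ is an $A$-null equivalence. For the former, since $P_A X$ is $A$-null, both $\T(\Sigma^k A, 0)$ and $\T(\Sigma^k A, P_A X)$ vanish for $k \ge 0$, so the induced map is trivially an isomorphism; by uniqueness of cellularization (an $A$-cellular equivalence from an $A$-cellular object is the cellularization) and the fact that $0$ is $A$-cellular, we get $\Cell_A P_A X \cong 0$. The latter is the dual computation using that $\Cell_A X$, being $A$-cellular, satisfies $\T(\Sigma^k \Cell_A X, Z) \cong \T(0, Z) = 0$ for $A$-null $Z$.

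The main obstacle I anticipate is the second identity. Establishing that $A$-cellular objects are invisible to $A$-null objects, i.e.\ that $\T(\Sigma^k \Cell_A X, Z) = 0$ for all $A$-null $Z$ and all $k \ge 0$, is not immediate from the bare definition of $A$-cellular (which controls maps \emph{out of} $\Sigma^k A$, not \emph{into} arbitrary $A$-null targets). I would need to bridge these by using that the class of $A$-cellular objects is the smallest class containing $A$ and closed under cofibres, retracts and coproducts (as recorded in the introductory discussion and Lemma~\ref{suseq}), together with the defining vanishing $\T(\Sigma^k A, Z) = 0$ for $A$-null $Z$; an induction along this generation, using the long exact sequences from the proof of Lemma~\ref{suseq}, should close the gap.
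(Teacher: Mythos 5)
Your argument for the first isomorphism is correct and is exactly the paper's: since $P_A X$ is $A$-null, the map between $0$ and $P_A X$ induces isomorphisms on $\T(\Sigma^k A, -)$ for $k\ge 0$, so it is an $A$-cellular equivalence, and since $0$ is $A$-cellular, uniqueness of cellularization gives $\Cell_A P_A X\cong 0$.

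The second isomorphism is where you have a genuine gap, and it stems from misreading Definition~\ref{defcell}(ii). You describe the definition of an $A$-cellular object as controlling ``maps out of $\Sigma^k A$''; that is the definition of an $A$-cellular \emph{equivalence}. An object $W$ is $A$-cellular precisely when every $A$-cellular equivalence $g\colon X\rightarrow Y$ induces isomorphisms $g_*\colon \T(\Sigma^k W, X)\rightarrow \T(\Sigma^k W, Y)$ for all $k\ge 0$; in other words, the orthogonality you declare ``not immediate from the bare definition'' \emph{is} the definition. This makes the paper's proof two lines long: if $Z$ is $A$-null then $\T(\Sigma^k A, Z)=0=\T(\Sigma^k A, 0)$ for $k\ge 0$, so the map $Z\rightarrow 0$ is an $A$-cellular equivalence; applying the cellularity of $\Cell_A X$ to this equivalence yields $\T(\Sigma^k \Cell_A X, Z)\cong \T(\Sigma^k \Cell_A X, 0)=0$, hence $\Cell_A X\rightarrow 0$ is an $A$-null equivalence and $P_A\Cell_A X\cong 0$. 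Both of your proposed substitutes for this step fail. First, your claim that ``$0\rightarrow Z$ is an $A$-null equivalence'' for $A$-null $Z$ is false: by Definition~\ref{defnull}(ii) it would mean $\T(\Sigma^k Z, W)=0$ for every $A$-null $W$, and taking $W=Z$, $k=0$ forces $\mathrm{id}_Z=0$, i.e., $Z=0$ (you have the roles swapped; the correct and useful statement is that $Z\rightarrow 0$ is an $A$-\emph{cellular} equivalence). Second, your fallback induction over ``the smallest class containing $A$ and closed under cofibres, retracts and coproducts'' invokes a minimality statement the paper never establishes in the triangulated setting: the introduction records that characterization only for spaces, and Lemma~\ref{suseq} proves closure properties only, not that every $A$-cellular object is so generated from $A$. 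That generation statement requires the underlying model structure and a cell-attachment construction, and is substantially harder than the lemma you are trying to prove.
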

\begin{proof}
Since $P_A X$ is $A$-null, $\T(\Sigma^ k A, P_A X)=0$ for all $k\ge 0$, so the map $P_A X\rightarrow 0$ is an $A$-cellular equivalence. This proves that
$\Cell_A P_A X=0$. Now, for the second part, it is enough to prove that the map $\Cell_A X\rightarrow 0$ is an $A$-null equivalence, i.e., that
$\T(\Sigma^k \Cell_A X, Z)=0$ for every $A$-null object $Z$ and all $k\ge 0$. But $Z$ being $A$-null is equivalent to the map $Z\rightarrow 0$ being an $A$-cellular
equivalence. Hence $\T(\Sigma^k \Cell_A X, Z)=0$ for all $k\ge 0$ since $\Sigma^k\Cell_A X$ is $A$-cellular for all $k\ge 0$.
\end{proof}

Cellularization and nullification functors do not preserve triangles in general.
In fact, as a consequence of Theorem \ref{thmnew}, for
every object $A$ in~$\T$, the functor $\Cell_A$ commutes with suspension if and only if $P_A$ commutes with suspension, and in this case $P_A$ and $\Cell_A$ fit into a triangle relating both functors. However, if
$X\rightarrow Y\rightarrow Z\rightarrow \Sigma X$ is a triangle and neither $P_A$ nor $\Cell_A$ are triangulated, it is possible to get another triangle keeping $X$ and taking the cellularization of $Z$ or keeping $Z$ and taking the nullification of $X$. The following result is a version of the \emph{fibrewise localization} of \cite[1.F.1]{Far96} for triangulated categories.

\begin{prop}
Let $A$ be any object in $\T$ and let $X\rightarrow Y\rightarrow Z\rightarrow \Sigma X$ be any triangle.
\begin{itemize}
\item[{\rm (i)}] \emph{(Cofibrewise cellularization)} There is a
morphism of triangles
$$
\xymatrix{
X\ar[r]\ar@{=}[d] & Y' \ar[r]\ar[d] & \Cell_A Z \ar[d] \ar[r] & \Sigma X\ar@{=}[d]\\
X\ar[r] & Y\ar[r] & Z\ar[r] & \Sigma X,
}
$$
where the map $Y'\rightarrow Y$ is an $A$-cellular equivalence.
\item[{\rm (ii)}] \emph{(Fibrewise nullification)} There is a morphism of triangles
$$
\xymatrix{
\Sigma^{-1} Z\ar@{=}[d] \ar[r] & X\ar[r]\ar[d] & Y \ar[r]\ar[d] & Z \ar@{=}[d] \\
\Sigma^{-1} Z \ar[r] & P_A X\ar[r] & Y''\ar[r] & Z,
}
$$
where the map $Y\rightarrow Y''$ is an $A$-null equivalence.
\end{itemize}
\label{fibcof}
\end{prop}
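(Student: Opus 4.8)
The plan is to build, in each case, a new triangle by taking a fibre or cofibre of a composite involving the canonical map $c_Z\colon\Cell_A Z\to Z$ (resp.\ $l_X\colon X\to P_A X$), to produce the comparison morphism by means of the completion axiom of triangulated categories, and finally to identify the decisive map as an $A$-cellular (resp.\ $A$-null) equivalence using the five lemma applied to the long exact sequences of Lemma~\ref{suseq}.

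For part~(i), write the given triangle as $X\stackrel{u}{\longrightarrow}Y\stackrel{v}{\longrightarrow}Z\stackrel{w}{\longrightarrow}\Sigma X$ and consider the composite $w\circ c_Z\colon\Cell_A Z\to\Sigma X$. I would define $Y'$ as a fibre of this composite, so that there is a triangle
$$
X\longrightarrow Y'\longrightarrow\Cell_A Z\stackrel{w\circ c_Z}{\longrightarrow}\Sigma X.
$$
Since $w\circ c_Z$ factors through $w$ via $c_Z$, the pair $(c_Z,\mathrm{id}_{\Sigma X})$ is compatible with the connecting maps of the two triangles (after rotating so that $\Cell_A Z$ and $Z$ sit in first position); the completion axiom then yields a map $Y'\to Y$ fitting into the displayed square, with the identity on $X$ and on $\Sigma X$. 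To see that $Y'\to Y$ is an $A$-cellular equivalence, I would apply $\T(\Sigma^k A,-)$ to both triangles and compare the resulting long exact sequences. Centring the five-term window on $\T(\Sigma^k A, Y')$, its four neighbours are $\T(\Sigma^{k+1}A,\Cell_A Z)$, $\T(\Sigma^k A, X)$, $\T(\Sigma^k A,\Cell_A Z)$ and $\T(\Sigma^{k-1}A, X)$; the $X$-terms map by the identity, while the $\Cell_A Z$-terms map isomorphically because $c_Z$ is an $A$-cellular equivalence and the shifts $k$ and $k+1$ are $\ge 0$ whenever $k\ge 0$. The five lemma then forces $\T(\Sigma^k A, Y')\to\T(\Sigma^k A, Y)$ to be an isomorphism for all $k\ge 0$.

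Part~(ii) is entirely dual. Rotating the given triangle to $\Sigma^{-1}Z\to X\to Y\to Z$, I would take $Y''$ to be a cofibre of the composite $\Sigma^{-1}Z\to X\stackrel{l_X}{\longrightarrow}P_A X$, obtaining a triangle $\Sigma^{-1}Z\to P_A X\to Y''\to Z$, and apply the completion axiom with the pair $(\mathrm{id}_{\Sigma^{-1}Z},l_X)$ (which here is already compatible with the first arrows, the bottom one being by construction the given composite) to produce the map $Y\to Y''$ and the identity on $Z$. Testing with the contravariant functor $\T(-,E)$ against an arbitrary $A$-null object $E$ and centring the five-term window on $\T(\Sigma^k Y'',E)$, its neighbours are two $Z$-terms mapping by the identity and the two terms $\T(\Sigma^{k+1}P_A X,E)$ and $\T(\Sigma^k P_A X,E)$, which map isomorphically since $l_X$ is an $A$-null equivalence and $E$ is $A$-null (the shifts $k$ and $k+1$ again being $\ge 0$). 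The five lemma then shows that $Y\to Y''$ is an $A$-null equivalence.

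The routine part is the five-lemma bookkeeping; the one place demanding care is the construction of the comparison map and the verification that it really belongs to a morphism of triangles. The subtlety is that the completion axiom only produces such a map once the outer square built from the connecting maps is known to commute, which is why I factor everything through $c_Z$ (resp.\ $l_X$) and, in the first case, rotate the triangles so that the matching square is exactly $(c_Z,\mathrm{id}_{\Sigma X})$ (resp.\ $(\mathrm{id}_{\Sigma^{-1}Z},l_X)$); this is also what guarantees that the vertical maps on $X$ and $\Sigma X$ (resp.\ on $\Sigma^{-1}Z$ and $Z$) come out as the required identities. One must additionally keep track of the shifts, since the cellular and null equivalence properties of $c_Z$ and $l_X$ are available only in non-negative degrees, but centring the five-term windows as above keeps all the relevant shifts non-negative.
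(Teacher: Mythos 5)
Your proof is correct and takes essentially the same route as the paper: both define $Y'$ as a fibre of the composite $\Cell_A Z\rightarrow Z\rightarrow \Sigma X$ (dually, $Y''$ as a cofibre of $\Sigma^{-1}Z\rightarrow X\rightarrow P_A X$), complete to a morphism of triangles with identities on the outer terms, and then identify the comparison map as an $A$-cellular (resp.\ $A$-null) equivalence. The only difference is presentational: the paper compresses your explicit five-lemma window into a citation of Lemma~\ref{suseq} (closure of $A$-cellular equivalences under fibres and of $A$-null equivalences under cofibres), whose proof is exactly the argument you spell out.
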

\begin{proof}
We prove part (i). The second part is proved in the same way. Consider the following diagram
$$
\xymatrix{
X\ar@{=}[d] &  & \Cell_A Z \ar[r]\ar[d] & \Sigma X\ar@{=}[d]\\
X\ar[r] & Y\ar[r] & Z\ar[r] & \Sigma X
}
$$
and  complete it to a morphism of triangles by defining $Y'$ as a fiber of the composition $\Cell_A Z\rightarrow Z\rightarrow \Sigma X$. The fact that the map $Y'\rightarrow Y$ is an $A$-cellular equivalence follows from Lemma~\ref{suseq}.
\end{proof}

It is also possible to relate cofibrewise $A$-cellularization with
$\Sigma^{-1}A$-cellularization and fibrewise $A$-nullification
with $\Sigma A$-nullification.
\begin{prop}
Let $A$ be any object in $\T$ and let $X\rightarrow Y\rightarrow Z\rightarrow \Sigma X$ be any triangle. Then the following hold:
\begin{itemize}
\item[{\rm (i)}] There is a morphism of triangles
$$
\xymatrix{
\Cell_{\Sigma^{-1}A}X \ar[r]\ar[d] & Y' \ar[r]\ar[d] & \Cell_A Z\ar[d]\ar[r] & \Sigma\Cell_{\Sigma^{-1}A} X \ar[d] \\
X\ar[r] & Y\ar[r] & Z \ar[r] & \Sigma X,
}
$$
where $Y'$ is $\Sigma^{-1}A$-cellular and the map $Y'\rightarrow Y$ is an $A$-cellular equivalence.
\item[{\rm (ii)}] There is a morphism of triangles
$$
\xymatrix{
\Sigma^{-1} Z \ar[r] \ar[d] & X\ar[r]\ar[d] & Y \ar[r]\ar[d] & Z \ar[d] \\
\Sigma^{-1}P_{\Sigma A} Z\ar[r] & P_A X\ar[r] & Y''\ar[r] & P_{\Sigma A} Z,
}
$$
where $Y''$ is $\Sigma A$-null and the map $Y\rightarrow Y''$ is an $A$-null equivalence.
\end{itemize}
\end{prop}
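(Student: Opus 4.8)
The plan is to obtain part (i) by imitating the cofibrewise cellularization of Proposition~\ref{fibcof}, but with the top triangle built from $\Cell_A$ applied to the connecting map instead of from the raw objects. Write the given triangle as $X\to Y\to Z\xrightarrow{\partial}\Sigma X$. First I would apply the functor $\Cell_A$ to $\partial$ to get $\Cell_A\partial\colon\Cell_A Z\to\Cell_A\Sigma X$ and choose a fibre $Y'$ of this map. Using Corollary~\ref{comcor} to identify $\Cell_A\Sigma X\cong\Sigma\Cell_{\Sigma^{-1}A}X$ (hence $\Sigma^{-1}\Cell_A\Sigma X\cong\Cell_{\Sigma^{-1}A}X$), this fibre triangle acquires exactly the shape
$$
\Cell_{\Sigma^{-1}A}X\to Y'\to\Cell_A Z\to\Sigma\Cell_{\Sigma^{-1}A}X
$$
demanded by the statement.

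Next I would build the morphism of triangles. The square formed by $\Cell_A\partial$ and $\partial$ together with the cellularization maps $\Cell_A Z\to Z$ and $\Cell_A\Sigma X\to\Sigma X$ commutes by naturality of the augmentation; under the identification of Corollary~\ref{comcor} the right-hand map becomes $\Sigma c$, where $c\colon\Cell_{\Sigma^{-1}A}X\to X$ is the $\Sigma^{-1}A$-cellularization map. After rotating the two triangles so that these two known maps sit in the first two vertical positions, the completion axiom (TR3) supplies the remaining vertical map; rotating back yields the middle map $Y'\to Y$ and a morphism of triangles whose outer verticals are precisely the cellularization maps, exactly as in Proposition~\ref{fibcof}.

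The two remaining assertions are then verified as follows. To see that $Y'$ is $\Sigma^{-1}A$-cellular I would rotate the top triangle into the form $\Sigma^{-1}\Cell_A Z\to\Cell_{\Sigma^{-1}A}X\to Y'\to\Cell_A Z$, so that $Y'$ becomes a \emph{cofibre}: here $\Cell_{\Sigma^{-1}A}X$ is $\Sigma^{-1}A$-cellular by definition and $\Sigma^{-1}\Cell_A Z$ is $\Sigma^{-1}A$-cellular by Corollary~\ref{corolcel}(i), so closure of $\Sigma^{-1}A$-cellular objects under cofibres (Lemma~\ref{suseq}(i)) gives the claim. To see that $Y'\to Y$ is an $A$-cellular equivalence I would apply $\T(\Sigma^k A,-)$ to the morphism of triangles and run the five lemma: the map $\Cell_A Z\to Z$ is an $A$-cellular equivalence, while $c$, being a $\Sigma^{-1}A$-cellular equivalence, induces isomorphisms on $\T(\Sigma^j A,-)$ for all $j\ge-1$, which is exactly enough to control the four neighbouring terms for every $k\ge0$.

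Part (ii) is the exact dual: starting from the rotated triangle $\Sigma^{-1}Z\to X\to Y\to Z$, apply $P_A$ to the connecting map, take a cofibre $Y''$, and use $P_A\Sigma^{-1}Z\cong\Sigma^{-1}P_{\Sigma A}Z$ from Corollary~\ref{comcor}; construct the morphism of triangles from the nullification maps via (TR3); check that $Y''$ is $\Sigma A$-null by rotating so that it becomes a \emph{fibre} of the $\Sigma A$-null objects $P_{\Sigma A}Z$ and $\Sigma P_A X$ (closure under fibres, Lemma~\ref{suseq}(i)); and check that $Y\to Y''$ is an $A$-null equivalence by the dual five lemma, noting that $l_Z\colon Z\to P_{\Sigma A}Z$ induces isomorphisms on $\T(\Sigma^k-,E)$ for every $A$-null $E$ and all $k\ge-1$. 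I expect the one genuinely delicate point, in both parts, to be the \emph{direction} of the closure property: cellular objects (respectively null objects) are not closed under extensions, so one must rotate the triangle to present $Y'$ as a cofibre (respectively $Y''$ as a fibre) of cellular (respectively null) objects before Lemma~\ref{suseq}(i) applies; keeping track of the index range $j\ge-1$ in the five lemma is the second point that must be handled with care.
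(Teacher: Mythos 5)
Your proposal is correct and follows essentially the same route as the paper: you define $Y'$ as a fibre of $\Cell_A$ applied to the connecting map $Z\to\Sigma X$, identify $\Sigma^{-1}\Cell_A\Sigma X\cong\Cell_{\Sigma^{-1}A}X$ via Corollary~\ref{comcor}, complete to a morphism of triangles, rotate to exhibit $Y'$ as a cofibre of $\Sigma^{-1}A$-cellular objects, and dualize for part (ii), exactly as the paper does. The only cosmetic difference is that where you run the five lemma by hand (tracking the index range $j\ge-1$), the paper simply cites the closure of $A$-cellular equivalences under fibres from Lemma~\ref{suseq}(ii), using that both $c_Z$ and $c_{\Sigma X}$ are $A$-cellular equivalences; the two arguments are the same computation.
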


\begin{proof}
For part (i), consider the following diagram
$$
\xymatrix{
\Cell_{\Sigma^{-1}A}X \ar[d] & & \Cell_A Z\ar[r]\ar[d]_{c_Z} & \Cell_A \Sigma X\ar[d]^{c_{\Sigma X}} \\
X\ar[r] & Y\ar[r] & Z \ar[r] & \Sigma X
}
$$
and complete it to a morphism of triangles by defining $Y'$ as a fiber of the map $\Cell_A Z\rightarrow \Cell_A \Sigma X$ and by using that $\Cell_{\Sigma^{-1}A}X\cong\Sigma^{-1}\Cell_{A}\Sigma X$ by Corollary~ \ref{comcor}. Thus, we have a triangle
$$
\Cell_{\Sigma^{-1}A}X\longrightarrow Y'\longrightarrow \Cell_A Z\longrightarrow
\Sigma\Cell_{\Sigma^{-1}A} X.
$$
Now, $\Sigma^{-1}\Cell_A Z$ is $\Sigma^{-1}A$-cellular, by
Corollary \ref{corolcel}, and so $Y'$ is $\Sigma^{-1}A$-cellular
by Lemma \ref{suseq}. Since $c_Z$ and $c_{\Sigma X}$ are both
$A$-cellular equivalences, so is the map $Y'\rightarrow Y$, again
by Lemma \ref{suseq}. The object $Y''$ in part (ii) is defined as the cofibre
of the map $P_A\Sigma^{-1}Z\rightarrow P_A X$ and the induced map
$Y\rightarrow Y''$ is an $A$-null equivalence by a similar
argument as above.
\end{proof}

\begin{defn}
We say that a localization functor in $\T$ is \emph{semiexact} if the full subcategory of local objects
is closed under extensions, fibres and products. Dually, a colocalization functor is \emph{semiexact} if the full subcategory of colocal objects is closed under extensions, cofibres and coproducts.
\end{defn}

As we have seen in Lemma~\ref{suseq} all nullification functors are semiexact. However, note that $\Cell_A$ is not semiexact in general (see Example \ref{notqe}). Semiexact functors \emph{almost} preserve triangles.

\begin{cor}
Let $X\rightarrow Y\rightarrow Z\rightarrow \Sigma X$ be any triangle and let $A$ be any object in~$\T$. If~$Z$ is $A$-null, then the nullification functor $P_A$ preserves the triangle.
If the cellularization functor $\Cell_A$ is semiexact and $X$ is $A$-cellular, then $\Cell_A$ preserves the triangle.
\label{cor01}
\end{cor}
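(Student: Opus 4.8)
The plan is to deduce both statements from the cofibrewise cellularization and fibrewise nullification of Proposition~\ref{fibcof}, combining them with the relevant closure-under-extensions property and the universal (uniqueness) characterizations of $\Cell_A$ and $P_A$.

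For the cellularization assertion I would begin by applying cofibrewise cellularization, Proposition~\ref{fibcof}(i), to the given triangle. This yields a morphism of triangles whose top row is $X\rightarrow Y'\rightarrow \Cell_A Z\rightarrow \Sigma X$ and whose middle vertical map $Y'\rightarrow Y$ is an $A$-cellular equivalence. The key point is that both outer terms of this top triangle are $A$-cellular: the object $X$ is $A$-cellular by hypothesis, and $\Cell_A Z$ is $A$-cellular by construction. Since $\Cell_A$ is assumed semiexact, $A$-cellular objects are closed under extensions, so $Y'$ is $A$-cellular. As $Y'\rightarrow Y$ is an $A$-cellular equivalence with $Y'$ cellular, uniqueness of cellularizations gives $Y'\cong\Cell_A Y$, and the left-hand square identifies the map $X\rightarrow Y'$ with $\Cell_A$ of the original map. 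Because $X$ is $A$-cellular, so is $\Sigma X$ by Corollary~\ref{corolcel}(i), whence $X\cong\Cell_A X$ and $\Sigma X\cong\Cell_A\Sigma X$. Rewriting the top triangle under these identifications produces exactly $\Cell_A X\rightarrow\Cell_A Y\rightarrow\Cell_A Z\rightarrow\Cell_A\Sigma X$, so $\Cell_A$ preserves the triangle.

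The nullification assertion is the formal dual, via fibrewise nullification, Proposition~\ref{fibcof}(ii). Here one obtains a morphism of triangles with bottom row $P_A X\rightarrow Y''\rightarrow Z\rightarrow\Sigma P_A X$ and with $Y\rightarrow Y''$ an $A$-null equivalence. Now $P_A X$ is $A$-null by construction and $Z$ is $A$-null by hypothesis, and since $A$-null objects are always closed under extensions by Lemma~\ref{suseq}(i), the object $Y''$ is $A$-null. An $A$-null equivalence into an $A$-null object is a nullification, so $Y''\cong P_A Y$; and $Z\cong P_A Z$ since $Z$ is $A$-null. The bottom triangle thus reads $P_A X\rightarrow P_A Y\rightarrow P_A Z\rightarrow\Sigma P_A X$, as required.

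The only real content is the extension-closure step. For $P_A$ this is automatic from Lemma~\ref{suseq}, which explains why part~(i) requires no hypothesis on $P_A$ beyond $Z$ being $A$-null. For $\Cell_A$, however, $A$-cellular objects need not be closed under extensions, so the hypothesis that $\Cell_A$ be semiexact is precisely what guarantees that the intermediate object $Y'$ is $A$-cellular. This is the only potential obstacle; the remaining identifications are formal consequences of the universal properties together with Corollary~\ref{corolcel}.
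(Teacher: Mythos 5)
Your proof is correct and follows essentially the same route as the paper's: both apply Proposition~\ref{fibcof} and then use closure under extensions (semiexactness of $P_A$ from Lemma~\ref{suseq}, respectively the assumed semiexactness of $\Cell_A$) to conclude that the intermediate object $Y''$, respectively $Y'$, is $A$-null, respectively $A$-cellular. The only difference is that you spell out the final identifications $Y'\cong\Cell_A Y$, $Y''\cong P_A Y$ via the universal properties, which the paper leaves implicit.
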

\begin{proof}
Using the fact that $P_A$ is always semiexact, it follows that the object $Y''$ in Proposition~\ref{fibcof}(ii) is also $A$-null. Similarly, if $Cell_A$ is semiexact then the object $Y'$ in Proposition~\ref{fibcof}(i) is $A$-cellular.
\end{proof}

The following theorem gives a characterization of the nullification and cellularization functors related by a triangle.
\begin{thm}
Let $A$ and $X$ be two objects in $\T$. Then the following hold:
\begin{itemize}
\item[{\rm (i)}] There is a triangle $\Cell_A X\rightarrow X\rightarrow P_A X\rightarrow \Sigma\Cell_A X$
if and only if the morphism of abelian groups $\T(\Sigma^{-1} A, \Cell_A X)\rightarrow \T(\Sigma^{-1}A ,X)$ is injective (e.g. if $\T(\Sigma^{-1} A, \Cell_A X)=0$).
\item[{\rm (ii)}] There is a triangle $\Cell_A X\rightarrow X\rightarrow P_{\Sigma A} X\longrightarrow \Sigma\Cell_A X$
if and only if the morphism $\T(A,X)\rightarrow \T(A, P_{\Sigma A}X)$ is the zero map (e.g. if $\T(A,X)=0$).
\end{itemize}
\label{thmnew}
\end{thm}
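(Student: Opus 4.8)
The plan is to treat both parts by completing the cellularization map $c_X\colon\Cell_A X\to X$ to a triangle $\Cell_A X\to X\to C\to\Sigma\Cell_A X$, where $C$ is a cofibre of $c_X$, and then to decide when $C$ is (isomorphic to) the relevant nullification of $X$. The basic tool is the long exact sequence obtained by applying $\T(\Sigma^k A,-)$ to this triangle. Since $c_X$ is an $A$-cellular equivalence, the induced maps on $\T(\Sigma^k A,-)$ are isomorphisms for every $k\ge 0$; feeding this into the sequence yields $\T(\Sigma^k A,C)=0$ for all $k\ge 1$, together with an identification of $\T(A,C)$ with the kernel of the map $\phi\colon\T(\Sigma^{-1}A,\Cell_A X)\to\T(\Sigma^{-1}A,X)$ induced by $c_X$.

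For part (i) I would first observe that $X\to C$ is always an $A$-null equivalence: the fibre term of the triangle is $\Cell_A X$, so Proposition~\ref{propos02} applies once we note that $P_A\Cell_A X\cong 0$ by Lemma~\ref{lemtriv}. Consequently $C\cong P_A X$ precisely when $C$ is $A$-null, i.e.\ when $\T(\Sigma^k A,C)=0$ for all $k\ge 0$, and by the computation above this holds if and only if $\T(A,C)=\ker\phi$ is trivial, that is, if and only if $\phi$ is injective. Conversely, an existing triangle forces $C\cong P_A X$ to be $A$-null, whence $\ker\phi=0$; this yields both implications of (i).

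For part (ii) the same computation shows that $C$ is automatically $\Sigma A$-null, since the only relevant groups $\T(\Sigma^k A,C)$ are those with $k\ge 1$ and these all vanish. Hence the triangle $\Cell_A X\to X\to P_{\Sigma A}X\to\Sigma\Cell_A X$ built from the canonical maps $c_X$ and $l_X\colon X\to P_{\Sigma A}X$ exists if and only if $X\to C$ is a $\Sigma A$-null equivalence, and I would reduce this last condition to the single identity $l_X\circ c_X=0$. If $l_X c_X=0$, then any morphism from $X$ to a $\Sigma A$-null object factors through $l_X$ and so restricts to $0$ on $\Cell_A X$; combined with the automatic vanishing of $\T(\Sigma^j\Cell_A X,Z)$ for $j\ge 1$ and $Z$ being $\Sigma A$-null (here $\Sigma^j\Cell_A X$ is $\Sigma A$-cellular by Corollary~\ref{corolcel}, so these groups vanish exactly as in Lemma~\ref{lemtriv}), the long exact sequence for $\T(-,Z)$ shows that $X\to C$ is a $\Sigma A$-null equivalence. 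The other direction of the statement is immediate: applying $\T(A,-)$ to an existing triangle and using that $c_X$ induces an isomorphism on $\T(A,-)$ forces $\T(A,X)\to\T(A,P_{\Sigma A}X)$ to be zero.

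The remaining and hardest step is to show that the vanishing of $\T(A,X)\to\T(A,P_{\Sigma A}X)$ implies $l_X c_X=0$. Since $(c_X)_*$ is an isomorphism on $\T(A,-)$, the hypothesis says precisely that $l_X c_X\colon\Cell_A X\to P_{\Sigma A}X$ induces the zero map on $\T(A,-)$ (and trivially on $\T(\Sigma^k A,-)$ for $k\ge 1$, the target being zero). I would therefore establish a detection lemma: a morphism $\gamma\colon W\to Z$ from an $A$-cellular object $W$ to a $\Sigma A$-null object $Z$ that induces $0$ on $\T(A,-)$ must vanish. This is proved by cellular induction on $W$: it is clear for $W=A$, where $\gamma=\gamma_*(\mathrm{id}_A)$, and it passes formally to coproducts and retracts; for $W$ a cofibre of $f\colon U_1\to U_2$ the inductive hypothesis on $U_2$ lets one factor $\gamma$ through a morphism $\Sigma U_1\to Z$, which is then zero because $\Sigma U_1$ is $\Sigma A$-cellular by Corollary~\ref{corolcel} while $Z$ is $\Sigma A$-null. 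Applying this lemma to $\gamma=l_X c_X$ gives $l_X c_X=0$ and completes the argument. Isolating and proving this detection lemma is where the real work lies; everything else is bookkeeping with the defining long exact sequences.
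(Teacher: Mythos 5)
For part~(i) your argument coincides with the paper's: both form the cofibre $C$ of $c_X\colon\Cell_A X\to X$, use Lemma~\ref{lemtriv} and Proposition~\ref{propos02} to see that $X\to C$ is an $A$-null equivalence, and read off from the long exact sequence that $C$ is always $\Sigma A$-null and is $A$-null precisely when $\T(\Sigma^{-1}A,\Cell_A X)\to\T(\Sigma^{-1}A,X)$ is injective. Nothing to add there.

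In part~(ii) you genuinely diverge from the paper, and this is where your proposal has a gap. Your formal reductions are correct: the triangle exists if and only if $X\to C$ is a $\Sigma A$-null equivalence, if and only if $l_X\circ c_X=0$, and the hypothesis says exactly that $l_X\circ c_X$ induces zero on $\T(\Sigma^k A,-)$ for all $k\ge 0$. The problem is your proof of the detection lemma by ``cellular induction on $W$''. In this paper, $A$-cellular objects are \emph{defined} by an orthogonality condition (Definition~\ref{defcell}); the paper proves that this class contains $A$ and is closed under coproducts, cofibres, retracts and suspensions (Lemma~\ref{suseq}, Corollary~\ref{corolcel}), but it neither states nor proves the converse generation principle --- that every $A$-cellular object is reached from $A$ by transfinitely iterating these operations --- and that principle is exactly what an induction over cells requires. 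It is a model-categorical fact about the cell-complex construction of right Bousfield localizations, not a consequence of the triangulated structure, and even granting it one must handle transfinite compositions of length greater than $\omega$, which are not expressible through the three operations you list inside $\T$. So, as written, your lemma is established only for objects in the subclass generated by $A$, and $\Cell_A X$ is not known (from the paper's toolkit) to lie in it. For comparison, the paper works on the other side of the picture: it takes the \emph{fibre} $F$ of $l_X\colon X\to P_{\Sigma A}X$, shows that $F\to X$ is a $\Sigma A$-cellular equivalence (again Lemma~\ref{lemtriv} plus Proposition~\ref{propos02}) and that $F$ is $A$-cellular, and then the long exact sequence shows that $F\to X$ is an $A$-cellular equivalence --- hence $F\cong\Cell_A X$, producing the triangle --- if and only if $\T(A,X)\to\T(A,P_{\Sigma A}X)$ is zero. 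Your detection lemma and the paper's cellularity of $F$ carry essentially the same content, so you have correctly isolated the hard point; but the induction you propose to settle it does not go through from the paper's definitions alone.
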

\begin{proof}
Let $C$ be a cofibre of the cellularization map $c_X\colon \Cell_A
X\rightarrow X$. By Lemma~\ref{lemtriv}, we have that $P_A \Cell_A
X\cong 0$, therefore, Proposition \ref{propos02} implies that the
map $X\rightarrow C$ is an $A$-null equivalence. Associated to
this triangle we have a long exact sequence of abelian groups
\begin{multline}\notag
\cdots\rightarrow\T(\Sigma A, \Cell_A X)\rightarrow \T(\Sigma A, X)\rightarrow \T(\Sigma A, C)
\rightarrow \T(A, \Cell_A X) \rightarrow \T(A, X)\\
\rightarrow \T(A, C)
\rightarrow \T(\Sigma^{-1}A, \Cell_A X)\rightarrow \T(\Sigma^{-1} A, X)\rightarrow \T(\Sigma^{-1} A, C)\rightarrow \cdots
\end{multline}
Since the cellularization map is always an $A$-cellular equivalence
$$
\T(\Sigma^k A, \Cell_A X)\cong\T(\Sigma^k A, X)
$$
for every $k\ge 0$, so $\T(\Sigma^k A, C)=0$ for
every $k\ge 1$ and thus $C$ is $\Sigma A$-null. It follows from the exact sequence that $C$ is $A$-null if and only of
the map
$$
\T(\Sigma^{-1} A, \Cell_A X)\longrightarrow \T(\Sigma^{-1}A ,X)
$$
is injective. This proves the first part.

To prove the second part, choose a fibre $F$ of the nullification map $l_X\colon X\rightarrow P_{\Sigma A}X$. Then, by a similar argument
as before, $F$ is $A$-cellular and the map $F\rightarrow X$ is a $\Sigma A$-cellular equivalence.
Now, from the exact sequence
$$
\cdots\longrightarrow \T(\Sigma A, P_{\Sigma A} X)\longrightarrow \T(A, F)\longrightarrow \T(A, X)\longrightarrow \T(A, P_{\Sigma A} X)\longrightarrow\cdots
$$
and using the fact that $P_{\Sigma A}X$ is $\Sigma A$-null, it follows that $\T(A,F)\cong \T(A,X)$, and hence
$F\rightarrow X$ is an $A$-cellular equivalence if and only if $\T(A,X)\rightarrow \T(A, P_{\Sigma A}X)$ is the zero map.
\end{proof}
The hypotheses of this theorem are satisfied for example when $\Cell_A$ or $P_A$ commute with suspension. Note that if
$\Cell_A$ commute with suspension, so does $P_A$ and viceversa. Indeed, if this is the case, then for all $k\in\Z$ there
is a triangle
$$
\Cell_{\Sigma^k A} X\longrightarrow X\longrightarrow P_{\Sigma^{k}A}X\longrightarrow\Sigma \Cell_{\Sigma^k A}X.
$$

\begin{rem}
Note that not every nullification and cellularization functor related by a triangle as above commutes with suspension. For example, if $\T$ is the homotopy category of spectra and $S$ is the sphere spectrum, then we have a triangle (see Example \ref{expos}):
$$
\Cell_{\Sigma^k S} X\longrightarrow X\longrightarrow P_{\Sigma^k S} X\longrightarrow \Sigma\Cell_{\Sigma^k S} X
$$
for every $X$ and all $k\in\Z$, but neither $\Cell_{\Sigma^k S}$ nor $P_{\Sigma^k S}$ commute with suspension.
\end{rem}

\section{$t$-structures associated to nullifications}
\label{tstruct}
In this section we show that there is a canonical way to associate to any $A$\nobreakdash-nul\-li\-fi\-ca\-tion functor in a triangulated category a
$t$-structure in the sense of \cite{BBD82}. This $t$-structure is defined by the $\Sigma A$-null objects and the colocal objects of a
colocalization functor associated with $P_A$ defined in the following way.
Let $(P_A, l)$ be a nullification functor in $\T$. For every $X$, choose a fibre $F_A X$ of the nullification map $l_X\colon X\rightarrow P_A X$. Thus, we have a triangle
$$
F_A X\stackrel{c_X}{\longrightarrow} X\stackrel{l_X}{\longrightarrow} P_A X\longrightarrow \Sigma F_A X.
$$
If we apply $P_A$ to this triangle, since $P_A X$ is $A$-null, Corollary~\ref{cor01} implies that
$$
P_A F_A X\longrightarrow P_A X\longrightarrow P_A P_A X\longrightarrow \Sigma P_AF_A X
$$
is a triangle, and therefore $P_A F_A X=0$. For each morphism $f\colon X\rightarrow Y$, we choose now a morphism $F_A f\colon F_A X\rightarrow F_A Y$ such that the following diagram commutes
$$
\xymatrix{
\Sigma^{-1} P_A X \ar[r]\ar[d]_{\Sigma^{-1}P_A f} & F_A X\ar[d]_{F_A f}\ar[r] & X\ar[r]^{l_X}\ar[d]_{f} & P_A X \ar[d]_{P_A f}\\
\Sigma^{-1} P_A Y \ar[r] & F_A Y\ar[r] & Y\ar[r]_{l_Y} & P_A Y.
}
$$
The morphism $F_A f$ is unique, since $P_A F_A X=0$ implies that
$\T(F_A X, \Sigma^k P_A X)=0$ for all $k\le 0$, and therefore $F_A
Y\rightarrow Y$ induces an isomorphism $\T(F_A X, F_A Y)\cong
\T(F_A X, Y)$. This proves that $F_A$ is a functor. Moreover,
since $F_AF_A X$ is the fibre of $F_A X\rightarrow P_A F_A X$ by
definition and $P_AF_A X=0$, the natural morphism $c_{F_A X}\colon F_A F_A X\rightarrow F_A X$ is
an isomorphism. We have that $c_{F_A X}=F_Ac_X$, since $c_X$ induces a bijection $\T(F_AF_A X, F_A X)\cong \T(F_AF_A X, X)$. Thus $F_A$ is idempotent and hence a
colocalization functor in $\T$.

Similarly, starting with a cellularization functor $\Cell_A$, we can construct a functor $C_A$ by taking $C_A X$ as a
cofibre of the cellularization map $\Cell_A X\rightarrow X$.
By the same argument as above, and using
Corollary~\ref{cor01}, it follows that when $\Cell_A$ is
semiexact, the functor $C_A$ is coaugmented and idempotent, i.e.,
a localization functor.

\begin{defn}
\label{assoc_loc_coloc}
Let $\T$ be a triangulated category and $A$ any object in $\T$. The functor $F_A$ defined above is called the \emph{colocalization functor associated to $P_A$}. If $\Cell_A$ is semiexact, then the functor $C_A$ is called the
\emph{localization functor associated to $\Cell_A$}.
\end{defn}

It follows that an object $X$ is $F_A$-colocal if and only if $P_A X=0$ and is $A$-null
if and only if $F_A X=0$. Dually, if $\Cell_A$ is semiexact, then an object
$X$ is $C_A$-local if and only if $\Cell_A X=0$, and is
$A$-cellular if and only if $C_A X= 0$.

\begin{lem}
If $X$ is $F_A$-colocal, then $\Sigma^k X$ is $F_A$-colocal for every $k\ge 0$. If $\Cell_A$ is semiexact and $X$ is $C_A$-local,
then $\Sigma^k X$ is $C_A$-local for every $k\le 0$.
\label{desus}
\end{lem}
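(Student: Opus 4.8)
The plan is to deduce both statements directly from the explicit descriptions of $F_A$-colocal and $C_A$-local objects recorded immediately before the lemma, so that each claim becomes a closure property of the class $\{X : P_A X = 0\}$ (resp.\ $\{X : \Cell_A X = 0\}$) under the relevant suspensions. Once the colocal/local conditions are rephrased in this way, the whole argument reduces to the shift bookkeeping already packaged in Corollary~\ref{corolcel}, via the identity $\T(\Sigma^a M, \Sigma^b N)\cong \T(\Sigma^{a-b} M, N)$ coming from the invertibility of $\Sigma$.

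For the first assertion, I would use that $X$ is $F_A$-colocal if and only if $P_A X = 0$. Because $0$ is $A$-null, this is equivalent to the statement that the map $X\to 0$ is an $A$-null equivalence. By Corollary~\ref{corolcel}(iv), $A$-null equivalences remain $A$-null equivalences after applying $\Sigma^k$ for every $k\ge 0$; applying this to $X\to 0$ shows that $\Sigma^k X\to 0$ is an $A$-null equivalence, whence $P_A\Sigma^k X = 0$ and $\Sigma^k X$ is $F_A$-colocal. Equivalently, one may observe that $P_A X = 0$ amounts to $\T(\Sigma^i X, Z)=0$ for all $A$-null $Z$ and all $i\ge 0$, and then note that $\T(\Sigma^i\Sigma^k X, Z)\cong \T(\Sigma^{i+k} X, Z)=0$ since $i+k\ge 0$.

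For the second assertion, the hypothesis that $\Cell_A$ is semiexact lets me use that $X$ is $C_A$-local if and only if $\Cell_A X = 0$. Now $\Cell_A X = 0$ is equivalent to $X$ being $A$-null: the cellularization map $0\to X$ is then an $A$-cellular equivalence, forcing $\T(\Sigma^j A, X)=0$ for all $j\ge 0$, and conversely any $A$-null $X$ has $\Cell_A X\cong 0$. This is exactly the input for Corollary~\ref{corolcel}(iii), which yields that $\Sigma^k X$ is $A$-null for every $k\le 0$, that is, $\Sigma^k X$ is $C_A$-local. Directly: for $j\ge 0$ and $k\le 0$ one has $\T(\Sigma^j A, \Sigma^k X)\cong \T(\Sigma^{j-k} A, X)=0$ since $j-k\ge 0$.

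I do not anticipate a genuine obstacle: once the colocal and local objects are identified with the $P_A$- and $\Cell_A$-trivial objects, the statement is entirely formal. The only point requiring care is the opposite ranges $k\ge 0$ and $k\le 0$ in the two parts. This asymmetry is forced by the one-sided conventions of Definitions~\ref{defcell} and~\ref{defnull}, where $A$-cellular equivalences and $A$-null objects are probed only by the suspensions $\Sigma^k A$ with $k\ge 0$; I would simply keep track of the sign in the shift isomorphism and verify that the indices $i+k$ and $j-k$ stay nonnegative, which is precisely what the stated ranges ensure.
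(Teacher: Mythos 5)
Your proof is correct and follows essentially the same route as the paper: identify $F_A$-colocal objects with those satisfying $P_AX=0$ (resp.\ $C_A$-local with $\Cell_AX=0$, equivalently $X$ being $A$-null) and then invoke the closure of $A$-null equivalences and $A$-null objects under the appropriate one-sided suspensions from Corollary~\ref{corolcel}. The paper's proof is just a terser version of this, leaving the step $P_AX=0\Rightarrow P_A\Sigma^kX=0$ implicit, which you justify explicitly.
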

\begin{proof}
If $X$ is $F_A$-colocal, then $P_A X=0$, so $P_A\Sigma^k X=0$ for all $k\ge 0$. Hence
$\Sigma^k X\cong F_A\Sigma^k X$. The second statement is proved similarly.
\end{proof}
\begin{prop}
For every object $A$ in $\T$ the functor $F_A$ is semiexact. If $\Cell_A$ is semiexact, then so is $C_A$.
\label{proposqx}
\end{prop}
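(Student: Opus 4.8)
The plan is to reduce both claims to the characterizations of colocal and local objects recorded just before Lemma~\ref{desus}, and then to feed these into the long exact sequences of Lemma~\ref{suseq}.

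For the first statement, I would start from the fact that an object $W$ is $F_A$-colocal precisely when $P_A W=0$. By the definition of an $A$-null equivalence together with the uniqueness of nullification, $P_A W=0$ is equivalent to the vanishing condition $\T(\Sigma^k W, E)=0$ for every $A$-null object $E$ and all $k\ge 0$ (the map $W\to P_A W=0$ being an $A$-null equivalence). Closure under coproducts is then immediate: for $F_A$-colocal objects $\{X_i\}$ and an $A$-null $E$, the natural isomorphism $\T(\bigoplus_i \Sigma^k X_i, E)\cong\prod_i \T(\Sigma^k X_i, E)$, combined with the fact that $\Sigma$ commutes with coproducts, forces $\T(\Sigma^k\bigoplus_i X_i, E)=0$ for all $k\ge 0$. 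For closure under cofibres and extensions, I would fix an $A$-null object $E$ and apply the cohomological functor $\T(-,E)$ to a triangle $X\to Y\to Z\to\Sigma X$, extracting the exact segments $\T(\Sigma^{k+1}X, E)\to\T(\Sigma^k Z, E)\to\T(\Sigma^k Y, E)$ and $\T(\Sigma^k Z, E)\to\T(\Sigma^k Y, E)\to\T(\Sigma^k X, E)$. If $X$ and $Y$ are $F_A$-colocal, the outer terms of the first segment vanish for every $k\ge 0$, so $Z$ is $F_A$-colocal; if $X$ and $Z$ are $F_A$-colocal, the outer terms of the second segment vanish for every $k\ge 0$, so $Y$ is $F_A$-colocal. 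As $E$ ranges over all $A$-null objects, this yields closure under cofibres and extensions, and hence semiexactness of $F_A$.

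For the second statement, the key observation is that when $\Cell_A$ is semiexact the $C_A$-local objects coincide with the $A$-null objects. Indeed, $X$ is $C_A$-local if and only if $\Cell_A X=0$; but $\Cell_A X=0$ says exactly that the cellularization map $0\to X$ is an $A$-cellular equivalence, i.e. $\T(\Sigma^k A, X)=0$ for all $k\ge 0$, which is the definition of $X$ being $A$-null. Conversely every $A$-null $X$ has $\Cell_A X=0$ by uniqueness of cellularization. Under this identification, the semiexactness of $C_A$ is precisely the assertion that the full subcategory of $A$-null objects is closed under fibres, extensions and products, which is part~(i) of Lemma~\ref{suseq}.

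The only delicate point is the index bookkeeping in the first part: one must verify that the suspension exponents appearing in the two exact segments remain in the range $k\ge 0$ where the vanishing hypotheses apply, which they do, since passing from $k$ to $k+1$ only increases the exponent. Apart from this, the first part is a routine application of the long exact sequences, while the second part becomes immediate once the $C_A$-local objects are identified with the $A$-null objects.
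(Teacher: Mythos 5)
Your proof is correct. For the first claim it is essentially the paper's argument with one step inlined: the paper also reduces everything to the characterization that $W$ is $F_A$-colocal precisely when $P_A W=0$, but where you run the long exact sequence for $\T(-,E)$ by hand, the paper instead cites Proposition~\ref{propos02} (if $P_A X=0$ then $Y\to Z$ is an $A$-null equivalence) to conclude $P_A Y\cong P_A Z$ in the cofibre and extension cases, and uses closure of $A$-null equivalences under coproducts for the coproduct case; since Proposition~\ref{propos02} is itself proved by those same exact sequences, the mathematical content is identical, and your index bookkeeping ($k+1\ge 0$ whenever $k\ge 0$) is the right point to check. The second claim is where you genuinely depart. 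The paper dismisses it with ``the second one is proved by the same argument,'' i.e., a dual rerun of the whole proof, whereas you identify the $C_A$-local objects with the $A$-null objects (via $\Cell_A X=0$ iff $0\to X$ is an $A$-cellular equivalence iff $\T(\Sigma^k A,X)=0$ for all $k\ge 0$) and then quote Lemma~\ref{suseq}(i) verbatim. This reduction is cleaner: nothing needs to be re-proved dually, and it makes explicit that $C_A$ and the nullification functor $P_A$ have the same local objects---which is exactly the observation that powers the corollary the paper proves immediately after this proposition (the equivalence $C_A\cong P_A$ and the resulting triangle $\Cell_A X\to X\to P_A X\to\Sigma\Cell_A X$). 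What the paper's symmetric packaging buys in exchange is uniformity: one argument, phrased at the level of functors applied to triangles rather than hom-groups, serves both claims by duality.
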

\begin{proof}
We prove only the first claim. The second one is proved by the same argument.
We need to prove that $F_A$-colocal objects are closed under extensions, cofibres and coproducts.

Let $X\rightarrow Y\rightarrow Z\rightarrow\Sigma X$ be a triangle such that $F_A X\cong X$ and $F_A Z\cong Z$.
Consider the following diagram in which the columns and the central row are triangles
$$
\xymatrix{
F_A X\ar[r]\ar[d] & F_A Y\ar[r]\ar[d] & F_A Z \ar[d]\\
X\ar[r]\ar[d] & Y \ar[d]\ar[r] & Z \ar[d] \\
P_A X\ar[r] & P_A Y\ar[r] & P_A Z.
}
$$
Since $F_A X\cong X$ and $F_A Z\cong Z$, we have that $P_A X=0$ and $P_A Z=0$. Now, by Proposition~\ref{propos02},
the map $Y\rightarrow Z$ is an $A$-null equivalence. Hence $P_A Y\cong P_A Z=0$ and hence $F_A Y\cong Y$. Second, assume that $X$ and $Y$ are $F_A$-colocal. Then $P_A X=0$ and $P_A Y=0$ and the same argument implies that $P_A Z=0$, so $F_A Z\cong Z$.

Now, let $\{X_i\}_{i\in\mathcal{I}}$ be a family of $F_A$-colocal objects. Then $P_A X_i=0$ for all $i\in\mathcal{I}$ and hence $P_A \left(\coprod_{i\in\mathcal{I}} X_i\right)=0$, since the class of $A$-null equivalences is closed under coproducts. Therefore $\coprod_{i\in\mathcal{I}} X_i$ is $F_A$-colocal.
\end{proof}

\begin{cor}
Let $A$ be any object in $\T$. Then, $\Cell_A$ is semiexact if and only if for every object $X$ there is a triangle $\Cell_A X\rightarrow X\rightarrow P_A X\rightarrow \Sigma \Cell_A X$.
\end{cor}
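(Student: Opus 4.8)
The plan is to prove the two implications separately, using for the forward direction the localization functor $C_A$ associated with $\Cell_A$ and for the converse a direct argument that $A$-cellular objects are closed under extensions.

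For the implication that semiexactness forces the triangle, I would first note that when $\Cell_A$ is semiexact the discussion preceding Definition~\ref{assoc_loc_coloc} produces a genuine localization functor $C_A$, obtained by choosing a cofibre of each cellularization map $c_X\colon\Cell_A X\to X$. By construction there is a triangle
$$
\Cell_A X\longrightarrow X\longrightarrow C_A X\longrightarrow \Sigma\Cell_A X,
$$
so everything reduces to identifying $C_A X$ with $P_A X$. I would verify the two defining properties of the $A$-nullification of $X$. First, the map $X\to C_A X$ is an $A$-null equivalence: since $P_A\Cell_A X\cong 0$ by Lemma~\ref{lemtriv}, Proposition~\ref{propos02} applied to the triangle above yields exactly this. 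Second, $C_A X$ is $A$-null: being in the image of the idempotent functor $C_A$ it is $C_A$-local, and the characterization following Definition~\ref{assoc_loc_coloc} gives $\Cell_A C_A X\cong 0$, which is equivalent to $C_A X$ being $A$-null. By uniqueness of the nullification these two facts give $C_A X\cong P_A X$, and substituting converts the defining triangle of $C_A$ into the desired triangle.

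For the converse, I would use that by Lemma~\ref{suseq}(i) the $A$-cellular objects are already closed under cofibres and coproducts, so semiexactness of $\Cell_A$ reduces to closure under extensions. Given a triangle $X\to Y\to Z\to\Sigma X$ with $X$ and $Z$ both $A$-cellular, Lemma~\ref{lemtriv} gives $P_A X\cong 0$ and $P_A Z\cong 0$. Since $P_A X\cong 0$, Proposition~\ref{propos02} shows that $Y\to Z$ is an $A$-null equivalence, hence $P_A Y\cong P_A Z\cong 0$. Feeding this into the hypothesized triangle $\Cell_A Y\to Y\to P_A Y\to\Sigma\Cell_A Y$ for the object $Y$, the vanishing of $P_A Y$ forces the cellularization map $\Cell_A Y\to Y$ to be an isomorphism, so $Y$ is $A$-cellular.

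The individual steps are short; the point that needs care is the forward direction, where the crux is recognizing that the a priori different localization functor $C_A$ built from $\Cell_A$ actually coincides with $P_A$. This rests on the equivalence between an object being $A$-null and having trivial cellularization, together with the uniqueness of the nullification, rather than on any delicate computation.
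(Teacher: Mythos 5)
Your proof is correct and takes essentially the same route as the paper: the forward direction constructs the associated localization functor $C_A$ and identifies it with $P_A$ via the equivalence between $\Cell_A W=0$ and $W$ being $A$-null (the paper phrases this as $C_A$ and $P_A$ having the same local objects, you verify the nullification universal property object by object, which amounts to the same thing). For the converse, where the paper simply cites Proposition~\ref{proposqx}, you inline exactly that proposition's argument — $P_A X=P_A Z=0$ together with Proposition~\ref{propos02} forces $P_A Y=0$, and the hypothesized triangle then makes $\Cell_A Y\rightarrow Y$ an isomorphism — so the mathematical content coincides.
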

\begin{proof}
If $\Cell_A$ is semiexact, then the associated functor $C_A$ is a localization functor and the map $X\rightarrow C_A X$ is an $A$-null equivalence. An object $X$ is $C_A$-local if and only if $C_A X\cong X$ or
equivalently $\Cell_A X=0$. But $\Cell_A X=0$ if and only if $P_A X\cong X$, since the map $X\rightarrow 0$ being an $A$-cellular equivalence
is the same as the object $X$ being $A$-null. Thus, the localization functors $C_A$ and $P_A$ have the same local objects and therefore they are equivalent. For the converse use Proposition~\ref{proposqx}.
\end{proof}

We recall from \cite[Definition 1.3.1]{BBD82} the notion of a
$t$-structure on a triangulated category $\T$.
\begin{defn}
A \emph{$t$-structure} on $\T$ is a pair of full subcategories $(\T^{\le 0}, \T^{\ge 0})$ such that,
denoting $\T^{\le n}=\Sigma^{-n}\T^{\le 0}$ and $\T^{\ge n}=\Sigma^{-n}\T^{\ge 0}$, the following hold:
\begin{itemize}
\item[{\rm (i)}] If $X$ is any object in $\T^{\le 0}$ and $Y$ is any object in $\T^{\ge 1}$, then $\T(X,Y)=0$.
\item[{\rm (ii)}] $\T^{\le 0}\subseteq \T^{\le 1}$ and $\T^{\ge 1}\subseteq \T^{\ge 0}$.
\item[{\rm (iii)}] For every object $X$ in $\T$, there is a triangle
$$
U\longrightarrow X\longrightarrow V\longrightarrow \Sigma U,
$$
where $U$ is an object in $\T^{\le 0}$ and $V$ is an object in $\T^{\ge 1}$.
\end{itemize}
The \emph{heart} of the $t$-structure is the full subcategory $\T^{\ge 0}\cap\T^{\le 0}$. The heart is always an abelian subcategory of $\T$.
\end{defn}

\begin{prop}
For any object $A$ in $\T$ the full subcategory of $\Sigma A$-null objects and the full subcategory of $F_A$-colocal objects
define a $t$-structure on $\T$.
\label{t-struc}
\end{prop}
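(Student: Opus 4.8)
The plan is to verify the three defining axioms of a $t$-structure directly, taking $\T^{\le 0}$ to be the full subcategory of $F_A$-colocal objects and $\T^{\ge 1}$ to be the full subcategory of $\Sigma A$-null objects (so that $\T^{\ge 0}$ consists of the $A$-null objects, after checking the shift conventions). The key technical inputs are already in place: an object $X$ is $F_A$-colocal if and only if $P_A X = 0$ and is $A$-null if and only if $F_A X = 0$; the functor $F_A$ is semiexact by Proposition~\ref{proposqx}; and Lemma~\ref{desus} controls the behaviour of colocal objects under suspension. First I would pin down the indexing: since $\T^{\ge 1} = \Sigma^{-1}\T^{\ge 0}$ and we want $\T^{\ge 1}$ to be the $\Sigma A$-null objects, I would check using Corollary~\ref{corolcel} that $X$ is $\Sigma A$-null precisely when $\Sigma X$ is $A$-null, confirming that $\T^{\ge 0}$ is the class of $A$-null objects.

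For axiom~(i), the orthogonality condition, I would take $X$ to be $F_A$-colocal (so $P_A X = 0$) and $Y$ to be $\Sigma A$-null, and show $\T(X, Y) = 0$. The natural approach is to use the triangle $F_A X \to X \to P_A X \to \Sigma F_A X$ together with the identity $F_A X \cong X$, and to feed the long exact sequence of Lemma~\ref{suseq} against the $\Sigma A$-null object $Y$. Because $P_A X = 0$ and $X$ is built as a fibre, the morphism group $\T(X, Y)$ should be forced to vanish: concretely, $\T(\Sigma^k A, Y) = 0$ for $k \ge -1$ when $Y$ is $\Sigma A$-null, and since $X$ is $A$-cellular (being $F_A$-colocal forces $\Cell_A X \cong X$ by the characterization preceding Lemma~\ref{desus}, or more directly via $P_A X = 0$), the mapping-out groups $\T(\Sigma^k X, Y)$ vanish in the relevant range.

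For axiom~(ii), the inclusions $\T^{\le 0} \subseteq \T^{\le 1}$ and $\T^{\ge 1} \subseteq \T^{\ge 0}$, I would translate these into $\Sigma^{-1}$-shift statements. The inclusion $\T^{\ge 1} \subseteq \T^{\ge 0}$ says every $\Sigma A$-null object is $A$-null, which is immediate from Corollary~\ref{corolcel}(iii) since a $\Sigma A$-null object is $\Sigma^k A$-null for all $k \le 0$ after suspending. The inclusion $\T^{\le 0} \subseteq \T^{\le 1} = \Sigma^{-1}\T^{\le 0}$ says that if $X$ is $F_A$-colocal then so is $\Sigma X$, which is exactly the content of Lemma~\ref{desus}. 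Axiom~(iii) is handed to us almost for free: for every $X$ the defining triangle $F_A X \to X \to P_A X \to \Sigma F_A X$ has $F_A X \in \T^{\le 0}$ by construction, and it remains only to verify $P_A X \in \T^{\ge 1}$, i.e.\ that $P_A X$ is $\Sigma A$-null; this follows because $P_A X$ is $A$-null and hence $\Sigma A$-null by the same shift argument used in axiom~(ii), or one checks $\T(\Sigma^k A, P_A X) = 0$ for $k \ge -1$ directly from $A$-nullity.

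I expect the main subtlety to be purely bookkeeping rather than conceptual: getting the suspension conventions consistent between the ``$k \ge 0$'' definitions of $A$-null and $A$-cellular used throughout the paper and the ``$\T^{\le n} = \Sigma^{-n}\T^{\le 0}$'' convention of \cite{BBD82}, so that the $\Sigma A$ appearing in the statement lands on the correct side of the $t$-structure. Once the indexing is fixed, axiom~(iii) is the defining triangle, and axioms~(i) and~(ii) reduce to the already-established closure and shift properties (Lemma~\ref{suseq}, Corollary~\ref{corolcel}, Lemma~\ref{desus}) together with the dictionary $F_A X = 0 \Leftrightarrow X$ is $A$-null and $P_A X = 0 \Leftrightarrow X$ is $F_A$-colocal.
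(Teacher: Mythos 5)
Your proof has a genuine gap, and it is not the ``bookkeeping'' you flag at the end: the indexing you chose is wrong, and with that indexing two of the three axioms you set out to verify become false statements. The proposition (and the paper's proof) takes $\T^{\ge 0}$ to be the $\Sigma A$-null objects, so that $\T^{\ge 1}=\Sigma^{-1}\T^{\ge 0}$ is the class of $A$-null objects; you instead place the $\Sigma A$-null objects at $\T^{\ge 1}$. Your justification is the claimed equivalence ``$X$ is $\Sigma A$-null precisely when $\Sigma X$ is $A$-null,'' but the suspension is on the wrong side: by Corollary~\ref{corolcel}, $X$ is $\Sigma A$-null if and only if $\Sigma^{-1}X$ is $A$-null (equivalently, $X$ is $A$-null iff $\Sigma X$ is $\Sigma A$-null), and likewise $\Sigma A$-nullity gives $\T(\Sigma^k A,Y)=0$ for $k\ge 1$, not $k\ge -1$. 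Consequently your $\T^{\ge 0}$ is the class of $\Sigma^2 A$-null objects, not the $A$-null ones, and the orthogonality you try to prove in axiom (i) --- $\T(X,Y)=0$ for $X$ an $F_A$-colocal object and $Y$ a $\Sigma A$-null object --- is simply false. Concretely, in Example~\ref{expos} with $A=S$ in the homotopy category of spectra, $H\Z$ is $F_S$-colocal (it is connective, so $P_S H\Z=0$) and it is $\Sigma S$-null (its homotopy vanishes in positive degrees), yet $\T(H\Z,H\Z)\ne 0$. Indeed, with the correct indexing the heart is exactly the class of objects that are simultaneously $F_A$-colocal and $\Sigma A$-null, and Example~\ref{expos} shows it can be nontrivial; orthogonality can only hold against the smaller class of $A$-null objects, which is why they, and not the $\Sigma A$-null objects, must sit at level $\T^{\ge 1}$. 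The same off-by-one error makes your axiom (ii) claim (``every $\Sigma A$-null object is $A$-null'') false: the true implication, and the one actually needed, is the reverse one, $A$-null $\Rightarrow$ $\Sigma A$-null, which you yourself invoke (correctly) in axiom (iii).

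There is a second, independent problem in your argument for axiom (i): you assert that being $F_A$-colocal ``forces $\Cell_A X\cong X$.'' This is not available and is false in general; one only has the inclusion of the $A$-cellular objects into the $F_A$-colocal ones (since $P_A\Cell_A X=0$ by Lemma~\ref{lemtriv}), and the paper emphasizes that $F_A$ is in general \emph{not} $\Cell_A$. Fortunately no such claim is needed. Once the indexing is corrected, the paper's argument is short: since $P_A X$ is $A$-null, applying $P_A$ to the triangle $F_A X\to X\to P_A X\to\Sigma F_A X$ and using Corollary~\ref{cor01} gives $P_A F_A X=0$; hence the map $F_A X\to 0$ is an $A$-null equivalence, so $\T(F_A X,Z)=0$ for every $A$-null $Z$, which is exactly axiom (i) once $\T^{\ge 1}$ is correctly identified with the $A$-null objects. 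Your treatment of axiom (iii) via the defining triangle, and of the inclusion $\T^{\le 0}\subseteq\T^{\le 1}$ via Lemma~\ref{desus}, does agree with the paper.
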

\begin{proof}
Let $\T^{\ge 0}$ be the full subcategory of $\Sigma A$-null
objects and $\T^{\le 0}$ the full subcategory of $F_A$-colocal
objects. Observe that $\T^{\ge 1}$ is the full subcategory of
$A$-null objects. The functor $F_A$ is defined by choosing a fibre
of the nullification map, hence condition (iii) of the definition
of a $t$-structure holds since for every $X$ there is a triangle
$$
F_A X\longrightarrow X\longrightarrow P_A X\longrightarrow \Sigma F_A X.
$$
Since the cofibre of this triangle is $A$-null, applying $P_A$ we get another triangle by Corollary~\ref{cor01}. Therefore
$P_A F_A X=0$ for every $X$ in $\T$. This means that the map $F_A X\longrightarrow 0$ is an $A$-null equivalence for every $X$ and
thus $\T(F_A X, P_A Y)=0$ proving condition (i). Now, condition (ii) follows easily since $A$-null objects are closed under
desuspensions and $F_A$-colocal objects are closed under suspensions by Lemma~\ref{desus}.
\end{proof}
For example, if for every $X$ in $\T$ we have that $\T(\Sigma^{-1}A,\Cell_A X)=0$ then Theorem~\ref{thmnew}(i) implies that $F_A\cong \Cell_A$ and hence the $t$-structure associated with $P_A$ is defined by the $\Sigma A$-null objects and the $A$-cellular objects.

\begin{lem}
If $P_A$ commutes with suspension, then the heart of its associated $t$-structure is trivial.
\end{lem}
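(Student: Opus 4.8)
The plan is to show that any object $X$ in the heart $\T^{\ge 0}\cap\T^{\le 0}$ must vanish, by exploiting the fact that $P_A$ commuting with suspension collapses the distinction between $A$-null and $\Sigma A$-null objects. Recall from Proposition~\ref{t-struc} that $\T^{\ge 0}$ is the full subcategory of $\Sigma A$-null objects, that $\T^{\le 0}$ is the full subcategory of $F_A$-colocal objects, and that $\T^{\ge 1}$ is the full subcategory of $A$-null objects. An object $X$ of the heart is therefore simultaneously $\Sigma A$-null and $F_A$-colocal, and by definition being $F_A$-colocal means exactly that $P_A X=0$.

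The key step is to prove that when $P_A$ commutes with suspension, the class of $\Sigma A$-null objects coincides with the class of $A$-null objects; equivalently, $P_A\cong P_{\Sigma A}$. I would derive this from Corollary~\ref{comcor}: taking $k=1$ there gives $P_A\Sigma X\cong\Sigma P_{\Sigma^{-1}A}X$, while the hypothesis that $P_A$ commutes with suspension gives $P_A\Sigma X\cong\Sigma P_A X$. Comparing these and applying $\Sigma^{-1}$ yields $P_A X\cong P_{\Sigma^{-1}A}X$ for every $X$; substituting $\Sigma A$ for $A$ gives $P_{\Sigma A}\cong P_A$. Since the $A$-null objects are precisely those $X$ with $P_A X\cong X$ and the $\Sigma A$-null objects are precisely those $X$ with $P_{\Sigma A}X\cong X$, the two classes agree.

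With this in hand the conclusion is immediate. Let $X$ be an object of the heart. Being $\Sigma A$-null, it is then also $A$-null by the key step, so the nullification map exhibits an isomorphism $P_A X\cong X$. On the other hand, being $F_A$-colocal forces $P_A X=0$. Combining the two isomorphisms gives $X\cong P_A X\cong 0$, so the heart contains only the zero object and is therefore trivial.

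The main obstacle is the key step identifying the $A$-null and $\Sigma A$-null classes; everything after it is formal. One should be careful that this identification rests only on Corollary~\ref{comcor} together with the defining property of commuting with suspension. An alternative route to the same conclusion, avoiding the explicit isomorphism $P_A X\cong X$, is to note that once $\T^{\ge 0}=\T^{\ge 1}$, any heart object $X$ lies in both $\T^{\le 0}$ and $\T^{\ge 1}$, whence axiom~(i) of the $t$-structure gives $\T(X,X)=0$ and thus ${\rm id}_X=0$, again forcing $X\cong 0$.
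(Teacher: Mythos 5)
Your proof is correct, and it shares the paper's pivot---when $P_A$ commutes with suspension, the $\Sigma A$-null and $A$-null objects coincide, so an object of the heart satisfies both $P_A X\cong X$ and $P_A X=0$---but you reach that pivot by a genuinely different route. The paper invokes Theorem~\ref{thmnew}: the hypothesis forces $\Cell_A$ to commute with suspension as well, the two functors then fit into a triangle $\Cell_A X\to X\to P_A X\to\Sigma\Cell_A X$ (so that $F_A\cong\Cell_A$), and the resulting closure of $A$-null objects under suspensions and desuspensions gives $P_A X\cong X$ for $X\in\T^{\ge 0}$, after which the paper concludes by passing through $\Cell_A X\cong 0$. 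You instead extract $P_{\Sigma A}\cong P_A$ purely formally from Corollary~\ref{comcor} (with $k=1$) combined with the defining isomorphism $P_A\Sigma X\cong\Sigma P_A X$, and you finish using only the characterization of $F_A$-colocal objects as those with $P_A X=0$; you never need $\Cell_A$, the triangle, or Theorem~\ref{thmnew} at all. What your route buys is economy and self-containedness: the whole argument lives on the nullification side. What the paper's route buys is the structural picture emphasized in that section, namely that under the hypothesis the $t$-structure is the one formed by the $A$-cellular and $\Sigma A$-null objects. Two small remarks: first, the isomorphism $P_{\Sigma^{-1}A}X\cong P_A X$ you obtain by comparing the two descriptions of $P_A\Sigma X$ is a priori only an abstract isomorphism, not a natural one, but this suffices, since null objects are precisely those admitting an abstract isomorphism $X\cong P_A X$ (local objects form the isomorphism closure of the image of an idempotent functor), so your identification of the two null classes is legitimate; second, your alternative ending via axiom~(i) of the $t$-structure---once $\T^{\ge 0}=\T^{\ge 1}$, a heart object satisfies $\T(X,X)=0$, hence ${\rm id}_X=0$ and $X\cong 0$---is also valid and is arguably the most formal way to close the argument.
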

\begin{proof}
If $P_A$ commutes with suspension, so does $\Cell_A$ and they fit into a triangle (see Theorem \ref{thmnew}).
Hence, $A$-cellular and $A$-null objects are both closed under suspensions and desuspensions.
So, if $X\ne 0$ is an object in $\T^{\ge 0}$, then $P_A X\cong X$. Thus $\Cell_A X\cong0$ and therefore $X$ cannot be in $\T^{\le 0}$.
\end{proof}
\begin{exmp}
Let $\T$ be a connective monogenic stable homotopy category as defined in~\cite[\S7]{HPS97} and denote by $S$ the unit of the monoidal structure. This is the case of the homotopy category of spectra or the derived category of a commutative ring. Then, by \cite[Proposition 7.1.2]{HPS97} the functors $\Cell_{\Sigma^k S}$ and $P_{\Sigma^k S}$ exist for all $k\in\Z$ and they are called the $k$-th connective cover functor and the $k$-th Postnikov section functor, respectively. For any object $X$ in $\T$ we have that:
$$
\T(\Sigma^n S, \Cell_{\Sigma^k S} X)=\left\{
\begin{array}{cr}
0 & \mbox{if $n<k$}, \\
\T(\Sigma^n S, X) & \mbox{if $n\ge k$},
\end{array}
\right.
$$
$$
\T(\Sigma^n S, P_{\Sigma^{k}S} X)\cong\left\{
\begin{array}{cr}
0 & \mbox{if $n\ge k$}, \\
\T(\Sigma^n S, X) & \mbox{if $n<k$}.
\end{array}
\right.
$$
Since $\T(\Sigma^{k-1}S, \Cell_{\Sigma^k S} X)=0$ for every $X$ in $\T$, we can apply Theorem \ref{thmnew} and we have a triangle
$$
\Cell_{\Sigma^k S}X\longrightarrow X\longrightarrow P_{\Sigma^k S}X\longrightarrow \Sigma\Cell_{\Sigma^k S} X.
$$
The heart of the $t$-structure associated to the nullification functor $P_{\Sigma^k S}$ is the full subcategory of $\T$ with
objects $X$ such that $\T(\Sigma^n S,X)=0$ if $n\ne k$ and it is equivalent to the category of $R$-modules, where $R$
denotes the ring $\T(S,S)$.
The objects in the heart are called \emph{Eilenberg-Mac\,Lane objects}.

Note that $\Cell_{\Sigma^k S}$ is not a triangulated functor. If $\T(\Sigma^{k-1}S, X)\ne 0$, then
$\Cell_{\Sigma^k S} \Sigma X\not\cong \Sigma \Cell_{\Sigma^k S}X$.
\label{expos}
\end{exmp}

\section{Cellularization of ring structures and module structures}
\label{cellringsmods}
Let $\T$ be a stable homotopy category in the sense of~\cite{HPS97}. That is, $\T$ is a triangulated category with a closed symmetric monoidal structure compatible with the triangulation (see~\cite[A.2]{HPS97}). We denote by $S$ be the unit of the monoidal structure, by $\otimes$ the tensor product and by $F(-,-)$ the internal hom in $\T$, right adjoint to the tensor product. For every $X$, $Y$ and $Z$ in $\T$ there is a natural isomorphism
$$
\T(X\otimes Y, Z)\cong \T(X, F(Y, Z)).
$$

An object $X$ in $\T$ is \emph{small} if the natural map
$\oplus_i\T(X, Y_i)\rightarrow \T(X, \coprod_i Y_i) $ is an
isomorphism of abelian groups. A set of objects $\mathcal{G}$ \emph{generates} $\T$
if the smallest triangulated subcategory of $\T$ that contains
$\mathcal{G}$ and is closed under coproducts and retracts is $\T$
itself. In particular, an object $X$ in $\T$ is zero if and only if $\T(\Sigma^k G, X)=0$ for all $k\in\Z$ and all $G\in\mathcal{G}$.

In this section, we assume that $\T$ is a connective monogenic
stable homotopy category~\cite[\S7]{HPS97}, i.e., the unit $S$ is a small generator,
and $\T(\Sigma^k S, S)=0$ for all $k<~0$. Our motivating examples
are the homotopy category of spectra ${\rm Ho}^s$ and the
(unbounded) derived category $\mathcal{D}(R)$ of a commutative
ring with unit $R$. But there are other examples of connective
monogenic stable homotopy categories, such as the category
$\mathcal{C}(B)$ of (unbounded) chain complexes of injective
$B$-comodules, where $B$ is a commutative Hopf algebra over a
field, and the morphisms are given by cochain homotopy classes of
maps; see~\cite[1.2.3(d)]{HPS97}.

A \emph{ring} $(E,\mu,\eta)$ in $\T$ is an object $E$ equipped with two morphism
$\mu\colon E\otimes E\rightarrow E$ and $\eta\colon S\rightarrow E$ such that the following diagrams commute:
$$
\xymatrix{
E\otimes E\otimes E\ar[r]^-{1\otimes\mu}\ar[d]_{\mu\otimes 1} & E\otimes E\ar[d]^{\mu} \\
E\otimes E \ar[r]_-{\mu} & E
}
\qquad
\xymatrix{
S\otimes E\ar[r]^{\eta\otimes 1}\ar[dr] & E\otimes E\ar[d]^{\mu} & E\otimes S\ar[l]_{1\otimes\eta}\ar[dl]\\
 & E. & }
$$
Given a ring $E$, a (left) \emph{$E$-module} $(M,m)$ in $\T$ is an object $M$ together with a map $m\colon E\otimes M\rightarrow M$ such that
the following diagrams commute:
$$
\xymatrix{
E\otimes E\otimes M \ar[r]^-{\mu\otimes 1}\ar[d]_{1\otimes m} & E\otimes M\ar[d]^m \\
E\otimes M \ar[r]_-{m} & M
}
\qquad
\xymatrix{
S\otimes M\ar[r]^{\eta\otimes 1}\ar[dr] & E\otimes M\ar[d]^m & M\otimes S\ar[l]_{1\otimes \eta}\ar[dl] \\
& M. &
}
$$
A \emph{ring map} between two rings $(E,\mu,\eta)$ and
$(E',\mu',\eta')$ is a map $f\colon E\rightarrow E'$ compatible
with the structure maps $\mu$, $\eta$, $\mu'$ and $\eta'$, i.e.,
$f\circ\mu=\mu'\circ{f\otimes f}$ and $f\circ \eta=\eta'$.

A \emph{module map} between two $E$-modules $(M,m)$ and $(M',m')$
is a map $f\colon M\rightarrow M'$ compatible with the structure
maps $m$ and $m'$, i.e., $f\circ m=m'\circ(1\otimes f)$.

As proved in \cite[Propostion 7.1.2]{HPS97} the cellularization
functors $\Cell_{\Sigma^k S}$ exist for all $k\in\Z$ in any
connective monogenic stable homotopy category. We will denote the
$S$-cellularization $\Cell_{S} X$ by $X^c$ and $\Cell_S (F(-,-))$
by $F^c(-,-)$.

For every $X$ in $\T$ there is a natural map $c_X\colon
X^c\rightarrow X$ such that $\T(\Sigma^k S, c_X)$ is an
isomorphism for every $k\ge 0$ and $\T(\Sigma^k S, X)=0$ if $k<0$.
An object $X$ is called \emph{connective} if $X\cong X^c$. Using
this notation, an object $X$ is $A$-null if and only if
$F^c(A,X)=0$ and a map $g\colon X\longrightarrow Y$ is an
$A$-cellular equivalence if and only if it induces an isomorphism
$F^c(A,X)\cong F^c(A,Y)$. Note also that if $X$ is connective,
then for all $Y$ and $Z$ there is an isomorphism
$$
\T(X, F^c(Y,Z))\cong \T(X, F(Y,Z))\cong \T(X\otimes Y, Z).
$$

In \cite[Theorem 4.2 and Theorem 4.5]{CG05} we give conditions
under which a nullification functor in the homotopy category of
spectra preserves ring and module objects, namely if it commutes
with suspension or under some connectivity assumptions.
Cellularization functors \emph{do not} preserve ring structures in
general even if they commute with suspension (see~\ref{acycex}).
But they do preserve modules over connective rings.
\begin{thm}
If $E$ is a connective ring and $M$ is an $E$-module, then for any object $A$, the cellularization $\Cell_A M$ has a unique $E$-module structure such that the cellularization map $\Cell_A M\rightarrow M$ is a map
of $E$-modules.
\label{cellmod}
\end{thm}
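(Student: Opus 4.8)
The plan is to build the $E$-module structure on $\Cell_A M$ by transporting the original action map $m\colon E\otimes M\rightarrow M$ through the cellularization. First I would observe that $\Cell_A M$ is $A$-cellular and that $E$ is connective, i.e. $E\cong E^c$. The key technical point is to understand the object $E\otimes\Cell_A M$ and compare it with $\Cell_A M$. Since $E$ is connective ($E\cong\Cell_S E$) and $\Cell_A M$ is $A$-cellular, I expect that $E\otimes(-)$ interacts well with $A$-cellular equivalences; concretely, tensoring with a connective object should preserve $A$-cellular equivalences and send $A$-cellular objects to $A$-cellular objects. This is the structural fact I would isolate and prove first, using the isomorphism $\T(X,F^c(Y,Z))\cong\T(X\otimes Y,Z)$ for connective $X$ recorded just before the statement.

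\textbf{Constructing the action.}
Granting that $E\otimes\Cell_A M\rightarrow E\otimes M$ is an $A$-cellular equivalence (because $\Cell_A M\rightarrow M$ is, and $E$ is connective), I would proceed as follows. The composite
$$
E\otimes\Cell_A M\longrightarrow E\otimes M\stackrel{m}{\longrightarrow} M
$$
is a map into $M$. I want to factor the action through $\Cell_A M$. Here I would invoke the universal property of the cellularization map, Definition~\ref{defcell}(ii): it suffices to know that the source $E\otimes\Cell_A M$ is $A$-cellular, for then the composite factors uniquely through $c_M\colon\Cell_A M\rightarrow M$, yielding a map $\widetilde{m}\colon E\otimes\Cell_A M\rightarrow\Cell_A M$. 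Uniqueness of this factorization is exactly the universal property, and it is what will force uniqueness of the module structure at the end.

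\textbf{Verifying the axioms.}
The remaining work is to check that $\widetilde{m}$ satisfies the associativity and unit diagrams for an $E$-module, and that $c_M$ is a module map. For the unit axiom I would compose with $\eta\otimes 1\colon S\otimes\Cell_A M\rightarrow E\otimes\Cell_A M$ and compare both sides after mapping down to $M$, using that $c_M$ is a monomorphism on the relevant hom-groups (more precisely, that $\T(W,c_M)$ is injective for $A$-cellular $W$, which follows since $\T(W,\Cell_A M)\cong\T(W,M)$ for such $W$). The same injectivity principle reduces the associativity pentagon to the associativity already known downstairs in $M$: both composites $E\otimes E\otimes\Cell_A M\rightrightarrows\Cell_A M$ become equal after applying $c_M$, hence are equal. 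The compatibility $c_M\circ\widetilde{m}=m\circ(1\otimes c_M)$ holds by the very construction of $\widetilde{m}$.

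\textbf{The main obstacle.}
The crux is the claim that $E\otimes(-)$ preserves $A$-cellularity and $A$-cellular equivalences when $E$ is connective. Everything else is a formal diagram chase powered by the universal property and the injectivity of $\T(W,c_M)$ on $A$-cellular test objects $W$. I expect the cellularity claim to follow from the adjunction: for a connective object $E$ and any $A$-cellular equivalence $f$, one checks that $F^c(E\otimes\Cell_A M,-)$ (or equivalently $\T(\Sigma^k(E\otimes\Cell_A M),-)$ against $A$-cellular equivalences) behaves correctly by rewriting via $F(E,-)$ and exploiting connectivity of $E$ together with the established closure properties of $A$-cellular objects under coproducts, cofibres and retracts from Lemma~\ref{suseq}. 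The subtlety is that $E\otimes(-)$ need not be exact in a way that respects the grading bound $k\ge 0$ built into the definition of $A$-cellular equivalence, so the connectivity hypothesis on $E$ is precisely what rules out contributions from negative degrees and makes the argument go through.
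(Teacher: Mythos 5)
Your construction is sound and, at bottom, runs on the same engine as the paper's proof: the isomorphism $\T(X\otimes Y,Z)\cong\T(X,F^c(Y,Z))$ for connective $X$. The difference is packaging. The paper never shows that $E\otimes\Cell_A M$ is $A$-cellular; it gets the required bijection directly from the chain $\T(E\otimes\Cell_A M,M)\cong\T(E,F^c(\Cell_A M,M))\cong\T(E,F^c(\Cell_A M,\Cell_A M))\cong\T(E\otimes\Cell_A M,\Cell_A M)$, where the middle isomorphism holds because $\Cell_A M$ is $A$-cellular and $c_M$ is an $A$-cellular equivalence. You instead prove the stronger lemma that $E\otimes W$ is $A$-cellular whenever $W$ is $A$-cellular and $E$ is connective, and then invoke the universal property of $c_M$. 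That lemma is true, and it is proved by exactly the adjunction mechanism you gesture at, provided you adjoint $E$ (not $W$) across: for $k\ge 0$, $\T(\Sigma^k(E\otimes W),X)\cong\T(\Sigma^k E,F^c(W,X))$, and $F^c(W,g)$ is an isomorphism for every $A$-cellular equivalence $g$ since on homotopy in degrees $j\ge 0$ it is $g_*\colon\T(\Sigma^j W,X)\to\T(\Sigma^j W,Y)$ and $S$ generates. (Rewriting via $F(E,-)$, as you suggest, would require $F(E,-)$ to preserve $A$-cellular equivalences, which is circular.) Your route buys a cleaner verification of the axioms and of uniqueness: associativity needs the test object $E\otimes E\otimes\Cell_A M$ to be $A$-cellular, which follows by applying your lemma twice without knowing that $E\otimes E$ is connective; the paper leaves this verification implicit.

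One assertion in your plan is false and should be deleted: tensoring with a connective object does \emph{not} preserve $A$-cellular equivalences in general. Take $E=\Sigma S$, which is connective; then $E\otimes f=\Sigma f$, and by Theorem~\ref{largo} $A$-cellular equivalences are closed under suspension only when $\Cell_A$ is triangulated, which fails, e.g., for $A=S$ (Example~\ref{expos}): in the homotopy category of spectra the map $\Sigma^{-1}H\Z\to 0$ is an $S$-cellular equivalence, but its suspension $H\Z\to 0$ is not. In particular even your specific instance, that $1\otimes c_M\colon E\otimes\Cell_A M\to E\otimes M$ is an $A$-cellular equivalence, can fail. Fortunately this claim is never used: your factorization invokes only Definition~\ref{defcell}(ii), which requires the source $E\otimes\Cell_A M$ to be $A$-cellular and asks nothing of the map $1\otimes c_M$. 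Strike the equivalence claim, keep the cellularity claim, and the rest of your argument (existence and uniqueness of $\widetilde{m}$, the unit and associativity diagrams via injectivity of $\T(W,c_M)$ for $A$-cellular $W$) is correct.
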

\begin{proof}
We need a map $\overline m\colon E\otimes \Cell_A M\rightarrow \Cell_A M$ rendering the following diagram commutative in $\T$:
$$
\xymatrix{
E\otimes \Cell_A M\ar@{.>}[r]^-{\overline{m}}\ar[d]_{1\otimes c_M} & \Cell_A M\ar[d]^{c_M}\\
E\otimes M\ar[r]_-{m} & M,
}
$$
where $m$ is the structure map of the $E$-module $M$. Since $E$ is connective, we have the following chain of isomorphisms
of abelian groups
\begin{multline*}
\T(E\otimes \Cell_A M, M)\cong \T(E, F^c(\Cell_A M, M)) \\
\cong \T(E, F^c(\Cell_A M, \Cell_A M))\cong \T(E\otimes \Cell_A M, \Cell_A M).
\end{multline*}
Thus, we can extend the map $m$ to a map $\overline{m}$ which provides an $E$-module structure on~$M$.
\end{proof}
\begin{rem}
The result is also true without the connectivity assumption on the ring if the $A$\nobreakdash-cellularization functor commutes with suspension, since in this case the $A$\nobreakdash-cellular objects and the $A$\nobreakdash-cellular equivalences are both closed under suspensions and desuspensions.
\end{rem}

The study of the preservation of ring objects is more involved. Although it is not difficult to produce a
product map in the cellularization of a ring under certain connectivity conditions, the construction of a unit
needs stronger hypotheses.

Let $(E,\mu,\eta)$ be a ring object, $A$ any object in $\T$ and consider
the cellularization functor $\Cell_A$. We need to construct a
multiplication map $\overline{\mu}\colon \Cell_A E\otimes \Cell_A
E\rightarrow \Cell_A E$ such that the following diagram commutes
$$
\xymatrix{
\Cell_A E\otimes \Cell_A E \ar[d]_{c_E\otimes c_E} \ar@{.>}[r]^-{\overline{\mu}} & \Cell_A E \ar[d]^{c_E} \\
E\otimes E \ar[r]_-{\mu} & E.
}
$$
If $\Cell_A$ commutes with suspension, then we can define $\overline{\mu}$ using the following sequence of isomorphisms
\begin{multline}\notag
\T(\Cell_A E\otimes \Cell_A E, E)\cong \T(\Cell_A E, F(\Cell_A E, E)) \\ \notag
\cong \T(\Cell_A E, F(\Cell_A E, \Cell_A E))\cong \T(\Cell_A E\otimes \Cell_A E, \Cell_A E),
\end{multline}
since the map $\Cell_A E\rightarrow E$ is an $A$-cellular
equivalence and $\Cell_A E$ is $A$-cellular. A~similar argument
holds when $\Cell_A E$ is connective (but $\Cell_A$ does not
necessarily commute with suspension), since in this case
$$
\T(\Cell_A E\otimes \Cell_A E, E)\cong \T(\Cell_A E, F^c(\Cell_A E, E)).
$$
So, there is no problem in constructing $\overline{\mu}$ when $\Cell_A$ commutes with suspension or when $\Cell_A E$ is connective. However, for the unit of $\Cell_A E$, we need a canonical map $\overline{\eta}\colon S\rightarrow \Cell_A E$ such that the following
diagram commutes:
$$
\xymatrix{
S \ar@{.>}[r]^-{\overline{\eta}}\ar[dr]_{\eta} & \Cell_A E \ar[d]^{c_E} \\
 & E.
}
$$
Let $C_A E$ be a cofibre of the cellularization map $\Cell_A
E\rightarrow E$. We have a triangle
$$
\Cell_A E\longrightarrow E\longrightarrow C_A E\longrightarrow \Sigma \Cell_A E
$$
yielding an exact sequence of abelian groups,
\begin{multline}\notag
\cdots\longrightarrow \T(\Sigma S, E)\longrightarrow \T(\Sigma S,C_A E)
\longrightarrow \T(S, \Cell_A E) \\
\longrightarrow \T(S, E)\longrightarrow \T(S,C_A E)
\longrightarrow\T(\Sigma^{-1}S, \Cell_A E)\longrightarrow\cdots
\end{multline}
Hence, there is an isomorphism $\T(S, \Cell_A E)\cong \T(S, E)$ if and only if $\T(\Sigma S, E)\rightarrow \T(\Sigma S, C_A E)$ is surjective and
$\T(S, C_A E)\rightarrow\T(\Sigma^{-1} S, \Cell_E A)$ is injective. In what follows we will use the notation $\pi_k X=\T(\Sigma^k S, X)$ for all $k\in\Z$.

\begin{thm}
Let $E$ be a ring object and $A$ any object in $\T$. Assume that either one of the following holds:
\begin{itemize}
\item[{\rm (i)}] $\Cell_A$ commutes with suspension, the morphism $\pi_1(E)\twoheadrightarrow \pi_1(P_A E)$ is surjective
and the morphism $\pi_0(P_A E)\rightarrowtail \pi_{-1}(\Cell_E A)$ is injective, or
\item[{\rm (ii)}] $\Cell_A E$ is connective, $\Cell_A$ is the colocalization functor associated to a nullification functor $P_B$ for some $B$ (see Definition~\ref{assoc_loc_coloc}), the morphism $\pi_1(E)\twoheadrightarrow \pi_1(P_B E)$
is surjective and $\pi_0(P_B E)=0$.
\end{itemize}
Then, $\Cell_A E$ has a unique ring structure such that the cellularization map is a map of rings.
\label{cellrings}
\end{thm}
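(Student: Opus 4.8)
The plan is to lift the ring structure of $E$ along the cellularization map $c_E\colon\Cell_A E\to E$. Concretely, I will construct a multiplication $\overline\mu\colon\Cell_A E\otimes\Cell_A E\to\Cell_A E$ and a unit $\overline\eta\colon S\to\Cell_A E$ satisfying $c_E\circ\overline\mu=\mu\circ(c_E\otimes c_E)$ and $c_E\circ\overline\eta=\eta$, then verify the ring axioms by pushing them forward along $c_E$, and finally deduce uniqueness. The construction of $\overline\mu$ is exactly the one carried out before the statement: in case~(i) it uses the adjunction isomorphism $\T(\Cell_A E\otimes\Cell_A E,-)\cong\T(\Cell_A E,F(\Cell_A E,-))$, and in case~(ii) the connective adjunction $\T(\Cell_A E\otimes\Cell_A E,-)\cong\T(\Cell_A E,F^c(\Cell_A E,-))$, valid since $\Cell_A E$ is connective. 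In either case the decisive fact is that $F(\Cell_A E,c_E)$ (respectively $F^c(\Cell_A E,c_E)$) is an isomorphism: its effect on $\pi_k$ is $(c_E)_*\colon\T(\Sigma^k\Cell_A E,\Cell_A E)\to\T(\Sigma^k\Cell_A E,E)$, and $\Sigma^k\Cell_A E$ is $A$-cellular by Corollary~\ref{corolcel} for the relevant range of $k$ (all $k\in\Z$ when $\Cell_A$ commutes with suspension, all $k\ge 0$ otherwise), so the $A$-cellular equivalence $c_E$ induces an isomorphism there. Defining $\overline\mu$ as the preimage of $\mu\circ(c_E\otimes c_E)$ under the resulting isomorphism makes the multiplication square commute and makes $\overline\mu$ the unique such lift.

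For the unit I would first identify the cofibre $C_A E$ of $c_E$. In case~(i), since $\Cell_A$ commutes with suspension, Theorem~\ref{thmnew} and the remark following it provide the triangle $\Cell_A E\to E\to P_A E\to\Sigma\Cell_A E$, so $C_A E\cong P_A E$; in case~(ii), the defining triangle of the colocalization functor $\Cell_A=F_B$ associated to $P_B$ (Definition~\ref{assoc_loc_coloc}) gives $C_A E\cong P_B E$. Feeding this into the long exact sequence displayed before the statement, the isomorphism $\T(S,\Cell_A E)\cong\T(S,E)$ holds precisely when $\pi_1(E)\to\pi_1(C_A E)$ is surjective and $\pi_0(C_A E)\to\pi_{-1}(\Cell_A E)$ is injective. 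These are exactly the stated hypotheses: in case~(i) they are read off directly, while in case~(ii) the injectivity is automatic because $\pi_0(C_A E)=\pi_0(P_B E)=0$. Hence $\eta$ has a unique preimage $\overline\eta$ with $c_E\circ\overline\eta=\eta$.

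To verify associativity and the unit laws I would transport each identity to $E$. Each axiom equates two morphisms from a tensor power $\Cell_A E^{\otimes n}$ (with $n=3$ for associativity, and $n=1$ with an $S$-factor for the unit laws) to $\Cell_A E$; after post-composing with $c_E$ and using the two compatibilities above together with the ring axioms for $E$, the two sides agree in $\T(\Cell_A E^{\otimes n},E)$. The remaining task, which is the crux, is to show that $(c_E)_*\colon\T(\Cell_A E^{\otimes n},\Cell_A E)\to\T(\Cell_A E^{\otimes n},E)$ is injective. I expect to prove it is in fact an isomorphism for all $n\ge 1$ by induction, stripping off the leftmost tensor factor by the $F$- (respectively $F^c$-) adjunction: this reduces the claim for $\Cell_A E^{\otimes n}$ to the statement that $F(\Cell_A E^{\otimes(n-1)},c_E)$ is an isomorphism, which upon computing $\pi_k$ and peeling the leftmost factor $\Sigma^k\Cell_A E$ (again $A$-cellular, and connective in case~(ii)) reduces to the case of one fewer factor. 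The base case $n=1$ is the isomorphism $\T(\Cell_A E,\Cell_A E)\cong\T(\Cell_A E,E)$ already used for $\overline\mu$. With this injectivity in hand, both the ring axioms and uniqueness follow at once: any ring structure making $c_E$ a ring map satisfies the same two compatibilities, so it must coincide with $(\overline\mu,\overline\eta)$.

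The main obstacle is precisely this last injectivity on the higher tensor powers $\Cell_A E^{\otimes n}$; everything else is a formal consequence of the construction. The inductive bootstrap works because each leftmost factor $\Sigma^k\Cell_A E$ stays $A$-cellular for $k\ge 0$ (and for all $k$ when $\Cell_A$ commutes with suspension) and, in case~(ii), stays connective, so that the appropriate adjunction can be reapplied at each stage.
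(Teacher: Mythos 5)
Your proposal is correct and follows essentially the same route as the paper: the multiplication $\overline\mu$ is lifted through the adjunction isomorphisms (via $F$ when $\Cell_A$ commutes with suspension, via $F^c$ when $\Cell_A E$ is connective), and the unit $\overline\eta$ comes from the long exact sequence of the cofibre triangle after identifying the cofibre of $c_E$ with $P_A E$ in case (i) (Theorem~\ref{thmnew}) and with $P_B E$ in case (ii) (the defining triangle of the associated colocalization functor), which is exactly the paper's ``previous discussion'' plus its two-sentence proof. Where you go beyond the paper is the inductive claim that $(c_E)_*\colon\T(\Cell_A E^{\otimes n},\Cell_A E)\to\T(\Cell_A E^{\otimes n},E)$ is an isomorphism for all $n\ge 1$, which cleanly supplies the verification of the ring axioms and the uniqueness assertion that the paper leaves implicit; the induction is sound, since monogenicity (the fibre of a $\pi_*$-isomorphism is zero because $S$ generates) turns the degreewise computation of $\pi_k F(\Cell_A E^{\otimes m},c_E)$ (resp.\ $\pi_k F^c(\Cell_A E^{\otimes m},c_E)$, using that $\Sigma^k\Cell_A E$ stays $A$-cellular and, in case (ii), connective for $k\ge 0$) into an actual isomorphism in $\T$ at each stage.
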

\begin{proof}
The first part is proved by using the previous discussion and the triangle $\Cell_A E\rightarrow E\rightarrow P_A E\rightarrow\Sigma\Cell_A E$. For the second, use the triangle $\Cell_A E\rightarrow E\rightarrow P_B E\rightarrow \Sigma\Cell_A E$.
\end{proof}

\begin{exmp}
Let $\T$ be a connective monogenic stable homotopy category, $A=S$ and $E$ any ring object in $\T$. Thus, $\Cell_A E$ is the connective cover of $E$ (see Example ~\ref{expos}) and there is a triangle
$$
\Cell_S E\longrightarrow E\longrightarrow P_S E\longrightarrow \Sigma \Cell_S E,
$$
where $P_S$ is the $0$-th Postnikov section functor. Therefore $\pi_1 P_S E\cong\pi_0 P_S E=0$ and by part (ii) of Theorem~\ref{cellrings} we have that if $E$ is a ring object, then so is its connective cover $\Cell_{S}E$. Moreover by Theorem~\ref{cellmod}, if $M$ is an $E$-module, then the $k$-th connective cover $\Cell_{\Sigma^k S} M$ is a $\Cell_S E$-module for all $k\in \Z$, since any $E$-module is canonically a $\Cell_S E$\nobreakdash-module.
\end{exmp}

\section{Cellularization of Eilenberg-Mac\,Lane spectra}
Throughout this section, $\T$ will denote the homotopy category of spectra ${\rm Ho}^s$ (see for example~\cite{Ada74}). This is a connective monogenic stable homotopy category. The tensor product in ${\rm Ho}^s$ is the smash product, denoted by $\wedge$, and the unit of the monoidal structure is the sphere spectrum $S$.

If $G$ is any abelian group and $k\in\Z$, we denote by $\Sigma^k HG$ the associated \emph{Eilenberg-Mac\,Lane spectrum}, i.e., $\pi_n\Sigma^k HG=G$ if $k=n$ and zero otherwise.
In this section, we show that cellularizations in ${\rm Ho}^s$ preserve stable $R$\nobreakdash-GEMs and study the particular case of the cellularization of a $k$-th suspension
of an Eilenberg-Mac\,Lane spectrum. We  prove that this cellularization is either zero or has at most two non trivial homotopy groups in
consecutive dimensions $k$ and~$k-1$.

\begin{defn}
An object $X$ in $\T$ is a \emph{stable GEM} if $X\cong\vee_{k\in\Z}\Sigma^ k HB_k$.
If the abelian groups $B_k$ are $R$-modules for some ring $R$ with unit,
we say that $X$ is a \emph{stable $R$-GEM}.
\end{defn}

In \cite[Proposition 5.3]{CG05} we proved that stable $R$-GEMs are the same thing as $HR$-modules, hence we can apply Theorem~\ref{cellmod}, since $HR$ is a connective ring spectrum, and obtain the following.

\begin{cor}
If $X$ is a stable $R$-GEM and $A$ is any object in $\T$, then $\Cell_A X$ is a stable $R$-GEM and the cellularization map is a map of
$HR$-modules. $\hfill\qed$
\end{cor}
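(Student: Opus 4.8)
The plan is to recognize that the corollary is an essentially immediate consequence of two facts already established: the identification of stable $R$-GEMs with $HR$-modules from \cite[Proposition 5.3]{CG05}, and Theorem~\ref{cellmod} on the preservation of modules over connective rings under cellularization. First I would observe that $HR$ is a connective ring spectrum, since $\pi_n HR = 0$ for $n \neq 0$ and in particular $\pi_n HR = 0$ for $n < 0$, so $HR \cong (HR)^c$. This is exactly the hypothesis needed to invoke Theorem~\ref{cellmod}.

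Next, given that $X$ is a stable $R$-GEM, I would use \cite[Proposition 5.3]{CG05} to regard $X$ as an $HR$-module $(X, m)$ in $\T$. Applying Theorem~\ref{cellmod} with $E = HR$ and $M = X$, we obtain that $\Cell_A X$ carries a unique $HR$-module structure for which the cellularization map $c_X \colon \Cell_A X \rightarrow X$ is a map of $HR$-modules. This supplies both the module-map assertion and, via the backward direction of \cite[Proposition 5.3]{CG05}, the fact that $\Cell_A X$ is again a stable $R$-GEM: being an $HR$-module is equivalent to being a stable $R$-GEM, so the $HR$-module structure on $\Cell_A X$ exhibits it as a stable $R$-GEM.

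There is essentially no obstacle here, as the corollary is a direct specialization; the only point requiring a word is the verification that $HR$ is connective, which is immediate from the definition of an Eilenberg-Mac\,Lane spectrum. One might worry about whether the equivalence in \cite[Proposition 5.3]{CG05} is compatible with the module maps in both directions, but since that proposition asserts an identification of the two notions (stable $R$-GEMs and $HR$-modules), the $HR$-module structure produced by Theorem~\ref{cellmod} transports back to a stable $R$-GEM decomposition of $\Cell_A X$ with no additional work. Thus the proof is a one-line application, and the statement can be left with $\hfill\qed$ as in the excerpt.
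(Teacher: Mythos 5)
Your proof is correct and follows exactly the paper's own argument: identify stable $R$-GEMs with $HR$-modules via \cite[Proposition 5.3]{CG05}, note that $HR$ is a connective ring spectrum, and apply Theorem~\ref{cellmod} with $E = HR$ and $M = X$ to obtain the $HR$-module structure on $\Cell_A X$ and the module-map property of the cellularization map, then transport back through the equivalence. Nothing to add.
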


Now we are going to consider the particular case $X=\Sigma^n HG$ for some $n\in\Z$, where $G$ is any $R$-module. For every $i\in\Z$ we have the following sequence of maps
$$
\Sigma^iHB_i\longrightarrow \Cell_A \Sigma^n HG\cong\vee_{k\in\Z}\Sigma^k HB_k\stackrel{c}{\longrightarrow} \Sigma^n HG,
$$
where the first map is the inclusion of each factor into the coproduct and the second is the cellularization map. Note that all
the maps are maps of $H\Z$-modules, since any $HR$-module is canonically an $H\Z$-module. By \cite[Corollary 5,4]{CG05}, we have that
$$
\T(\Sigma^i HB_i, \Sigma^n HG)_{{H\Z{\mbox{-}\mathrm{mod}}}}=0
$$
unless $i=n$ or $i=n-1$, where $\T(-,-)_{{H\Z{\mbox{-}\mathrm{mod}}}}$ denotes the set of $H\Z$\nobreakdash-module maps; see \cite{Gut05} for more details. Now, using the universal property of $\Cell_A$ and the fact that $\Sigma^i HB_i$ is a retract of $\vee_{k\in\Z}\Sigma^k HB_k$ and hence $A$-cellular, we get
that $B_i=0$ if $i\ne n$ or $i\ne n-1$. We obtain the following result (cf. \cite[Theorem~5.6]{CG05})
\begin{thm}
Let $G$ be any abelian group, $n\in\Z$ and $A$ be any object in $\T$. Then $\Cell_A \Sigma^n HG\cong \Sigma^{n-1}HB\vee \Sigma^n HC$ for some
abelian groups $B$ and $C$. If $G$ is an $R$-module, then $B$ and $C$ are also $R$-modules. $\hfill\qed$
\end{thm}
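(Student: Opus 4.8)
The plan is to reduce the statement to the structure theory of stable $R$-GEMs together with the $H\Z$-linear $\Hom$-computation of~\cite[Corollary 5.4]{CG05}. First I would observe that any abelian group $G$ is a module over $\Z$, and an $R$-module if $G$ is assumed to be one; taking $R=\Z$ in the unadorned case, the spectrum $\Sigma^n HG$ is then a stable $R$-GEM and hence, by \cite[Proposition 5.3]{CG05}, an $HR$-module. Applying the preceding corollary on the cellularization of stable $R$-GEMs yields a splitting
$$
\Cell_A\Sigma^n HG\cong\bigvee_{k\in\Z}\Sigma^k HB_k
$$
as $HR$-modules, where each $B_k$ is an $R$-module and the cellularization map $c\colon\Cell_A\Sigma^n HG\rightarrow\Sigma^n HG$ is a map of $HR$-modules. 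It therefore suffices to prove that $B_i=0$ whenever $i\notin\{n,n-1\}$, for then we may set $B=B_{n-1}$ and $C=B_n$.

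Next I would analyse each wedge summand. Fix $i$ and let $\iota_i\colon\Sigma^i HB_i\rightarrow\Cell_A\Sigma^n HG$ be the inclusion of the summand and $r_i$ the complementary projection, so that $r_i\circ\iota_i=\mathrm{id}$; both $\iota_i$ and $r_i$ are $H\Z$-module maps, hence in particular morphisms in $\T$. Thus $\Sigma^i HB_i$ is a retract of the $A$-cellular object $\Cell_A\Sigma^n HG$ and is therefore itself $A$-cellular by Lemma~\ref{suseq}(i). Now consider the composite
$$
\Sigma^i HB_i\stackrel{\iota_i}{\longrightarrow}\Cell_A\Sigma^n HG\stackrel{c}{\longrightarrow}\Sigma^n HG,
$$
which is a map of $H\Z$-modules. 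By \cite[Corollary 5.4]{CG05} there are no nonzero $H\Z$-module maps from $\Sigma^i HB_i$ to $\Sigma^n HG$ unless $i=n$ or $i=n-1$, so for $i\notin\{n,n-1\}$ the composite $c\circ\iota_i$ vanishes as an $H\Z$-module map, and hence also as a morphism in $\T$, since the forgetful functor is faithful.

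Finally I would deduce the vanishing of the off-diagonal summands from the second universal property of the cellularization map: since $\Sigma^i HB_i$ is $A$-cellular, a morphism $\Sigma^i HB_i\rightarrow\Cell_A\Sigma^n HG$ is the \emph{unique} one whose composite with $c$ equals a prescribed map $\Sigma^i HB_i\rightarrow\Sigma^n HG$. As both $\iota_i$ and the zero map compose with $c$ to give $c\circ\iota_i=0$, uniqueness forces $\iota_i=0$. But $\iota_i$ is split by $r_i$, so $\mathrm{id}_{\Sigma^i HB_i}=r_i\circ\iota_i=0$, whence $\Sigma^i HB_i\cong 0$ and $B_i=0$. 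This holds for every $i\notin\{n,n-1\}$, leaving $\Cell_A\Sigma^n HG\cong\Sigma^{n-1}HB\vee\Sigma^n HC$ with $B$ and $C$ the $R$-modules $B_{n-1}$ and $B_n$.

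The main obstacle is the bookkeeping of two categories at once: the computation that kills the off-diagonal summands lives in the category of $H\Z$-modules (through \cite[Corollary 5.4]{CG05}), whereas the universal property of $\Cell_A$ and the retract argument live in $\T$. The delicate point is that the splitting of the GEM, the cellularization map, and the wedge inclusions and projections can all be chosen $H\Z$-linearly, so that $c\circ\iota_i$ is at once an $H\Z$-module map and a morphism in $\T$; faithfulness of the forgetful functor then transports the vanishing across, after which the universal property closes the argument.
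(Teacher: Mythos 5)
Your proof is correct and takes essentially the same approach as the paper: splitting $\Cell_A\Sigma^n HG$ as a stable $R$-GEM via the preceding corollary, invoking \cite[Corollary 5.4]{CG05} to kill $H\Z$-module maps $\Sigma^i HB_i\rightarrow\Sigma^n HG$ for $i\notin\{n,n-1\}$, and combining the retract argument (so each summand is $A$-cellular) with the universal property of the cellularization map to force $B_i=0$. The only difference is that you make explicit the uniqueness step the paper leaves implicit, namely that $c\circ\iota_i=0$ forces $\iota_i=0$ and hence $\mathrm{id}=r_i\circ\iota_i=0$.
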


We can obtain information about the groups $B$ and $C$ by using
the universal property of the cellularization functor. Since
$\Cell_A \Sigma^n HG\cong \Sigma^{n-1}HB\vee \Sigma^n HC$, we have
that $\Sigma^{n-1}HB$ and $\Sigma^n HC$ are both $A$-cellular,
because they are a retract of an $A$-cellular object. We have the
following commutative diagrams
$$
\xymatrix{
\Sigma^{n-1}HB\vee \Sigma^n HC \ar[r]^-c & \Sigma^n HG \\
\Sigma^{n-1} HB \ar[ur] \ar[u] &
}
\qquad
\xymatrix{
\Sigma^{n-1}HB\vee \Sigma^n HC \ar[r]^-c & \Sigma^n HG \\
\Sigma^{n} HC \ar[ur] \ar[u] &
}
$$
that together with the universal property of the cellularization map give rise to the isomorphisms of abelian groups
\begin{gather}\notag
\T(\Sigma^{n-1}HB, \Sigma^{n-1}HB)\times \T(\Sigma^{n-1}HB, \Sigma^n HC)\cong \T(\Sigma^{n-1} HB, \Sigma^n HG),\\ \notag
\T(\Sigma^n HC, \Sigma^{n-1} HB)\times \T(\Sigma^n HC, \Sigma^n HC)\cong \T(\Sigma^n HC, \Sigma^n HG).
\end{gather}
By the same argument, using that $\Sigma^n HB$ is also $A$-cellular, since it is the suspension of an $A$-cellular object, we obtain the isomorphism
$$
\T(\Sigma^{n}HB, \Sigma^{n-1}HB)\times \T(\Sigma^{n}HB, \Sigma^n HC)\cong \T(\Sigma^{n} HB, \Sigma^n HG).
$$
\begin{prop}
Let $A$ be any object in $\T$ and $n\in\Z$. If $B\cong\pi_{n-1}\Cell_A\Sigma^n HG$ and $C\cong\pi_n \Cell_A\Sigma^n HG$, then
the following hold:
\begin{itemize}
\item[{\rm (i)}] $\Hom(B,B)\oplus \Ext(B,C)\cong \Ext(B,G)$,
\item[{\rm (ii)}] $\Hom(C,C)\cong \Hom(C,G)$,
\item[{\rm (iii)}] $\Hom(B,C)\cong \Hom(B,G)$.
\end{itemize}
\label{propo01}
\end{prop}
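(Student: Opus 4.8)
The plan is to extract the three isomorphisms directly from the three $\Hom$-group identities established immediately before the statement, by computing the relevant groups $\T(\Sigma^i HB', \Sigma^j HG')$ between (suspensions of) Eilenberg--Mac\,Lane spectra. Recall that for Eilenberg--Mac\,Lane spectra one has
$$
\T(\Sigma^i HU, \Sigma^j HV)\cong
\begin{cases}
\Hom(U,V) & \text{if } i=j,\\
\Ext(U,V) & \text{if } i=j+1,\\
0 & \text{otherwise},
\end{cases}
$$
since the only nonzero $[HU, \Sigma^r HV]$ occur for $r=0$ (giving $\Hom$) and $r=1$ (giving $\Ext$), as the stable cohomology operations between Eilenberg--Mac\,Lane spectra for abelian groups are concentrated in these two degrees. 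With $B\cong\pi_{n-1}\Cell_A\Sigma^n HG$ and $C\cong\pi_n\Cell_A\Sigma^n HG$, the point is simply to feed the correct index differences into this table for each of the three pre-established product decompositions.

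First I would prove (i). The relevant identity is
$$
\T(\Sigma^{n-1}HB, \Sigma^{n-1}HB)\times \T(\Sigma^{n-1}HB, \Sigma^n HC)\cong \T(\Sigma^{n-1} HB, \Sigma^n HG).
$$
Here the first factor has equal indices $n-1=n-1$, so it is $\Hom(B,B)$; the second has indices differing by one ($n-(n-1)=1$), so it is $\Ext(B,C)$; and the right-hand side likewise has indices differing by one, giving $\Ext(B,G)$. Since the product of abelian groups is their direct sum, this reads $\Hom(B,B)\oplus\Ext(B,C)\cong\Ext(B,G)$, which is (i). For (ii) I would use
$$
\T(\Sigma^n HC, \Sigma^{n-1} HB)\times \T(\Sigma^n HC, \Sigma^n HC)\cong \T(\Sigma^n HC, \Sigma^n HG);
$$
the first factor now has the \emph{wrong} index difference ($n-1<n$, i.e.\ target below source by one), so it vanishes, leaving $\Hom(C,C)\cong\Hom(C,G)$. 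Finally, for (iii) I would use the third identity
$$
\T(\Sigma^{n}HB, \Sigma^{n-1}HB)\times \T(\Sigma^{n}HB, \Sigma^n HC)\cong \T(\Sigma^{n} HB, \Sigma^n HG),
$$
whose first factor again vanishes (target below source), leaving $\Hom(B,C)\cong\Hom(B,G)$.

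The verification is essentially bookkeeping once the computation of $\T(\Sigma^i HU,\Sigma^j HV)$ is in hand, so the only genuine content is justifying that these morphism groups are governed by $\Hom$ and $\Ext$ in the two adjacent degrees and vanish otherwise. The main obstacle is therefore to pin down this computation precisely; I would invoke the description already used in the paragraph preceding the statement (via \cite[Corollary 5.4]{CG05} and \cite{Gut05}, which identify $H\Z$-module maps between suspended Eilenberg--Mac\,Lane spectra), and note that since $B$, $C$ and $G$ are all $R$-modules the isomorphisms respect the $R$-module structure, though the statement only records the underlying abelian-group identities.
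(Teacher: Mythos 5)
Your strategy is exactly the paper's: feed the three isomorphisms displayed just before the statement into a computation of maps between suspensions of Eilenberg--Mac\,Lane spectra, with the product of abelian groups split as a direct sum. However, the key computation --- which you yourself identify as the only genuine content of the proof --- is both mis-stated and mis-justified. First, your table has the $\Ext$ case backwards: you write $\Ext(U,V)$ when $i=j+1$, i.e.\ for $\T(\Sigma HU,HV)$, but that group is zero (it is $H^{-1}(HU;V)$, which vanishes because $HU$ is connective); the $\Ext$ group occurs when the \emph{target} is suspended once relative to the source, $\T(HU,\Sigma HV)\cong\Ext(U,V)$. Your three applications silently use the correct convention, so this reads as a slip, but as written the table contradicts its own use.

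Second, and more seriously, the clause ``$0$ otherwise'', justified by the claim that stable cohomology operations between Eilenberg--Mac\,Lane spectra are concentrated in degrees $0$ and $1$, is false for maps of spectra: $\T(HU,\Sigma^r HV)$ is in general nonzero for arbitrarily large $r$ (for $U=V=\Z/p$ it is the degree-$r$ part of the mod-$p$ Steenrod algebra). Concentration in two adjacent degrees is a statement about $H\Z$-\emph{module} maps --- that is what \cite[Corollary 5.4]{CG05} and \cite{Gut05} compute, and what the paragraph preceding the proposition uses to bound the homotopy of $\Cell_A\Sigma^n HG$ --- whereas the three isomorphisms you quote are isomorphisms between groups of \emph{spectrum} maps, since they arise from the universal property of $\Cell_A$ in ${\rm Ho}^s$; so citing the module-map result does not justify your table. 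Fortunately the argument only needs degrees $-1$, $0$, $+1$, where the spectrum-level groups are respectively $0$, $\Hom(U,V)$ and $\Ext(U,V)$: the first by connectivity, the second by Hurewicz and universal coefficients, and the third by universal coefficients together with the fact that $H_1(HU;\Z)=0$ for every abelian group $U$. These are precisely the three facts the paper's proof invokes, namely $\T(\Sigma HB,HC)=0$, $\T(HB,HC)=\Hom(B,C)$ and $\T(HB,\Sigma HC)=\Ext(B,C)$; with them in place of your table, your bookkeeping goes through and reproduces the paper's proof.
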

\begin{proof}
The result follows by using the previous isomorphisms and the fact that $\T(HB, HC)=\Hom(B,C)$, $\T(HB, \Sigma HC)=\Ext(B,C)$ and $\T(\Sigma HB, HC)=0$
for any abelian groups $B$ and~$C$.
\end{proof}

\begin{cor}
If $G$ is a divisible abelian group, then $\Cell_A\Sigma^n HG$ is either zero or $\Sigma^n HC$ for some abelian group $C$. Moreover,
this group $C$ satisfies the relation $\Hom(C,C)\cong \Hom(C,G)$.
\end{cor}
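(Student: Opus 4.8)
The plan is to combine the structure result preceding this corollary, which gives $\Cell_A\Sigma^n HG\cong\Sigma^{n-1}HB\vee\Sigma^n HC$, with Proposition~\ref{propo01} and the classical fact that the divisible abelian groups are exactly the injective objects in the category of $\Z$-modules. The entire argument reduces to showing that the lower summand $\Sigma^{n-1}HB$ is forced to vanish.

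First I would record that, since $G$ is divisible and hence injective as a $\Z$-module, one has $\Ext(B,G)=0$ for every abelian group $B$. Feeding this into the isomorphism of Proposition~\ref{propo01}(i),
$$
\Hom(B,B)\oplus\Ext(B,C)\cong\Ext(B,G),
$$
the right-hand side is zero, so both direct summands on the left must vanish; in particular $\Hom(B,B)=0$.

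Next I would observe that $\Hom(B,B)=0$ can only occur when $B=0$, because the identity morphism $1_B$ is always a nonzero element of $\Hom(B,B)$ whenever $B\neq 0$. Hence $B=0$, and the structure theorem collapses to $\Cell_A\Sigma^n HG\cong\Sigma^n HC$; the subcase $C=0$ accounts for the possibility that the cellularization is zero. The remaining relation $\Hom(C,C)\cong\Hom(C,G)$ is precisely Proposition~\ref{propo01}(ii), which holds unconditionally, so nothing further is required.

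There is essentially no obstacle here: the only input beyond the previously established results is the equivalence between divisibility and injectivity for abelian groups, which makes $\Ext(B,G)$ vanish and thereby kills the group $B$ via the identity-map argument. Once this observation is in place the result is immediate.
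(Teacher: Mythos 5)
Your proof is correct and takes essentially the same route as the paper: the paper's own one-line argument invokes exactly the vanishing $\Ext(B,G)=0$ for divisible $G$ together with Proposition~\ref{propo01}, and your write-up simply makes explicit the implicit steps (that the direct sum in part (i) forces $\Hom(B,B)=0$, hence $B=0$ via the identity map, and that part (ii) gives the relation on $C$).
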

\begin{proof}
This follows from Proposition \ref{propo01}, since if $G$ is divisible, $\Ext(B,G)=0$ for any abelian group~$B$.
\end{proof}

\begin{exmp}
Let $p$ be a prime and consider the Eilenberg-Mac\,Lane spectrum $H\Z/p$, where $\Z/p$ denotes the integers modulo $p$. We know that the homotopy groups of $\Cell_A H\Z/p$ are
$\Z/p$-modules and thus, they split as a direct sum of copies of $\Z/p$. Since suspensions and retracts of $A$-cellular objects are again
$A$\nobreakdash-cellular, we have that either $\Cell_A H\Z/p\cong H\Z/p$ or zero. If $\Cell_A H\Z/p$ is zero, the triangle
$$
H\Z/p^{r-1}\longrightarrow H\Z/p^r\longrightarrow H\Z/p\rightarrow \Sigma H\Z/p^{r-1}
$$
for every $r\ge 2$, implies by induction that $\Cell_A H\Z/p^r=0$ for every $r\ge 2$ by Proposition \ref{propos02}.

Assume now that $H\Z/p$ is $A$-cellular. Given a positive integer
$k$, we know that $\Cell_A H\Z/p^k\cong \Sigma^{-1}HB\vee HC$,
where $B$ and $C$ are now $\Z/p^k$-modules and thus they split as
a direct sum of subgroups isomorphic to $\Z/p^j$, with $1\le j\le
k$ (see for example \cite{Kap69}). Suppose that $B\ne 0$, and let
$m$ be the smallest natural number such that $\Z/p^m$ is a direct
summand of $B$. Then $\Sigma^{-1}H\Z/p^m$ and $H\Z/p^m$ are
$A$-cellular. The triangle
$$
\Sigma^{-1}H\Z/p^m\longrightarrow H\Z/p^{rm}\longrightarrow H\Z/p^{(r+1)m}\rightarrow H\Z/p^m
$$
for every $r\ge 1$ imply inductively that $H\Z/p^{rm}$ is $A$-cellular for every $r\ge 1$ by Lemma \ref{suseq}. Take $r\ge 1$ such that
$rm>k$. Then, since $H\Z/p^{rm}$ is $A$-cellular, we infer that $H\Z/p^k$ is $A$-cellular by using the triangle
$$
H\Z/p\longrightarrow H\Z/p^{i}\longrightarrow H\Z/p^{i-1}\rightarrow\Sigma H\Z/p.
$$
This is a contradiction, therefore $B=0$.

To compute $C$, we use Proposition \ref{propo01}(ii). We have an isomorphism
$$
\Hom(C,C)\cong \Hom(C,\Z/p^k).
$$
Since $C$ is a direct sum of copies of $\Z/p^j$, with $1\le j\le k$, this forces $C=\Z/p^j$, with $1\le j\le k$.
Hence for any object $A$ in $\T$, we have that $\Cell_A H\Z/p^k$ is either zero or $H\Z/p^j$ with $1\le j\le k$.
\end{exmp}
\begin{exmp}
Let $A=H\Z/p^k$, and $n\ge k$. The map $H\Z/p^k\longrightarrow H\Z/p^n$ induced by the map $\Z/p^k\longrightarrow \Z/p^n$ sending $1\mapsto p^{n-k}$
is an $A$-cellular equivalence, so $\Cell_{H\Z/p^k}H\Z/p^n=H\Z/p^k$. This example in the case $k=1$ and $n=2$ shows that $\Cell_A$ is not semiexact in
general. If it were semiexact, then the triangle
$$
H\Z/p\longrightarrow H\Z/p^2\longrightarrow H\Z/p\longrightarrow \Sigma H\Z/p
$$
would imply that $H\Z/p^2$ is $H\Z/p$-cellular, and that is a contradiction.

If $n\le k$, then the map $\Z/p^k\longrightarrow \Z/p^k$ sending $1\mapsto p^n$ yields a triangle
$$
H\Z/p^k\longrightarrow H\Z/p^k\longrightarrow H\Z/p^n\vee\Sigma H\Z/p^n
\longrightarrow \Sigma H\Z/p^k.
$$
So $H\Z/p^n$ is $H\Z/p^k$-cellular since it is a retract of a
cofibre of a map between $H\Z/p^k$-cellular objects. Hence,
$\Cell_{H\Z/p^k}H\Z/p^n=H\Z/p^n$. \label{notqe}
\end{exmp}
The above examples can be generalized when instead of $H\Z/p^n$ we take an arbitrary suspension $\Sigma^m H\Z/p^n$ for any $m\in\Z$.
\begin{prop}
For any $m\in\Z$ and any $n\ge 0$, $\Cell_A \Sigma^m
H\Z/p^n\cong\Sigma^m H\Z/p^j$, where $1\le j\le n$. If $A=\Sigma^m
H\Z/p^k$, then $\Cell_A \Sigma^m H\Z/p^n\cong \Sigma^m H\Z/p^k$ if
$n\ge k$ or $\Cell_A \Sigma^m H\Z/p^n\cong \Sigma^m H\Z/p^n$ if
$n<k$. $\hfill\qed$ \label{propop}
\end{prop}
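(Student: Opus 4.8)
The plan is to reduce the proposition to the unsuspended case $m=0$, which is already settled in the two examples preceding this proposition, by means of the commutation formula of Corollary~\ref{comcor}. Taking $k=m$ and $X=H\Z/p^n$ in that corollary gives
$$
\Cell_A \Sigma^m H\Z/p^n\cong \Sigma^m\Cell_{\Sigma^{-m}A}H\Z/p^n,
$$
and since $\Sigma^m$ is an invertible functor this identifies $\Cell_A\Sigma^m H\Z/p^n$, up to the $m$-fold suspension, with $\Cell_{A'}H\Z/p^n$ for the auxiliary object $A'=\Sigma^{-m}A$. Thus every assertion about the suspended cellularization follows from the corresponding assertion about $\Cell_{A'}H\Z/p^n$ after applying $\Sigma^m$.

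For the first statement, I would invoke the computation carried out in the first of the two examples above, which shows that for an \emph{arbitrary} object $A'$ in $\T$ the cellularization $\Cell_{A'}H\Z/p^n$ is either zero or of the form $H\Z/p^j$ with $1\le j\le n$. Applying this with $A'=\Sigma^{-m}A$ and suspending by $\Sigma^m$ yields $\Cell_A\Sigma^m H\Z/p^n\cong\Sigma^m H\Z/p^j$ with $1\le j\le n$, the vanishing case being subsumed by the trivial group (and in particular covering $n=0$, where $H\Z/p^0=0$).

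For the second statement, I would specialize to $A=\Sigma^m H\Z/p^k$, so that $A'=\Sigma^{-m}A=H\Z/p^k$ and the reduction above becomes
$$
\Cell_{\Sigma^m H\Z/p^k}\Sigma^m H\Z/p^n\cong \Sigma^m\Cell_{H\Z/p^k}H\Z/p^n.
$$
The right-hand factor is exactly what Example~\ref{notqe} computes: it equals $H\Z/p^k$ when $n\ge k$ and $H\Z/p^n$ when $n<k$. Suspending by $\Sigma^m$ then gives the two stated cases.

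The conceptually hard work---the vanishing of the lower group via the ``smallest direct summand'' induction, and the cellular-equivalence arguments identifying the upper group---has already been done in the examples, so here there is essentially no obstacle beyond bookkeeping. The only point to watch is purely formal: one must apply Corollary~\ref{comcor} with the correct sign, so that the auxiliary object is $\Sigma^{-m}A$ rather than $\Sigma^m A$, and one must observe that the conclusions about the unsuspended cellularization are stated for a completely general object $A'$, which is precisely what licenses the substitution $A'=\Sigma^{-m}A$.
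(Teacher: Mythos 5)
Your proposal is correct and matches the paper's intent: the paper offers no explicit proof (it simply asserts that ``the above examples can be generalized'' to arbitrary suspensions), and your reduction via Corollary~\ref{comcor} with auxiliary object $\Sigma^{-m}A$, followed by the quotation of the two preceding examples, is precisely the bookkeeping that assertion leaves to the reader. Your handling of the edge cases (the zero cellularization and $n=0$) also correctly reflects an imprecision already present in the paper's own statement rather than a gap in your argument.
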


\subsection{Bousfield's acyclizations}
Let $E$ be any spectrum and consider now the
Bousfield homological localization functor $L_E$. In \cite{Bou79a},
Bousfield proved that for every spectrum $E$, there is another
spectrum $A$, such that homological localization with respect to $E$
is the same as nullification with respect to $A$, i.e., $L_E X\cong
P_A X$ for every $X$. Since $L_E$ is a triangulated functor, for
every spectrum $X$ there is a triangle
$$
\Cell_A X\longrightarrow X\longrightarrow P_A X\longrightarrow \Sigma\Cell_A X,
$$
where, using the terminology of \cite{Bou79a}, $\Cell_A$ is called the \emph{$E_*$-acyclization} functor.

In \cite{Gut10}, we study the effect of homological
localization functors on Eilenberg\nobreakdash-Mac\,Lane spectra and give a method to compute the localization $L_E HG$ for any spectrum $E$ and any abelian group $G$. The results depend basically on the $E$\nobreakdash-acy\-cli\-ci\-ty patterns of the spectra $H\Q$ and $H\Z/p$ for each prime $p$.

In the following computations, we will always assume that the
nullification functor $P_A$ is equivalent to some homological
localization $L_E$, and hence $\Cell_A$ is the corresponding
$E_*$\nobreakdash-acyclization functor.

\subsubsection{Acyclizations of $H\Z$}
\label{acycex} In \cite[\S5]{Gut10}, we prove
that the only possible homological localizations of the
Eilenberg-Mac\,Lane spectrum $H\Z$ are $0$, $H\Z_P$ or
$\prod_{p\in P}H\widehat{\Z}_p$, where $P$ is a set of primes,
$\Z_P$ denotes the integers localized at the set  $P$, and
$\widehat{\Z}_p$ denotes the $p$-adic integers (see
\cite[Proposition 5.2]{Gut10}). In the case of acyclizations, we
have the following possibilities. If $P_A H\Z=0$ then $\Cell_A
H\Z\cong H\Z$ and if $P_A H\Z\cong H\Z$, then $\Cell_A H\Z=0$. If
$P_A H\Z\cong H\Z_P$, then there is a triangle
$$
\Sigma^{-1} H(\oplus_{p\in P'}\Z/p^{\infty})\longrightarrow H\Z\longrightarrow H\Z_P\longrightarrow H(\oplus_{p\in P'}\Z/p^{\infty}),
$$
where $P'$ is the complement of the set of primes $P$. So, in this case $\Cell_A H\Z=\Sigma^{-1}H(\oplus_{p\in P'}\Z/p^{\infty})$.
Finally, if $P_A H\Z=\prod_{p\in P}H\widehat{\Z}_p$, then there is a triangle
$$
\Sigma^{-1} H\left(\left(\prod\nolimits_{p\in P}\widehat{\Z}_p\right)/\Z\right)
\longrightarrow H\Z\longrightarrow \prod\nolimits_{p\in P}H\widehat{\Z}_p\longrightarrow H\left(\left(\prod\nolimits_{p\in P}\widehat{\Z}_p\right)/\Z\right).
$$
Hence, $\Cell_A H\Z\cong \Sigma^{-1} H\left(\left(\prod\nolimits_{p\in P}\widehat{\Z}_p\right)/\Z\right)$. These computations provide an example of a cellularization functor that commutes with suspension but does not preserve (connective) ring spectra.

\subsubsection{Acyclizations of $H\Z/p^k$}
In\cite[\S5]{Gut10}, we prove that $P_A H\Z/p^k=0$ or $H\Z/p^k$, hence $\Cell_A H\Z/p^k$ is either $\Z/p^k$ or $0$, respectively
(cf. Proposition \ref{propop}).

\subsubsection{Acyclizations of $H\Z/p^{\infty}$}
In \cite[\S5]{Gut10}, we prove that $P_A H\Z/p^{\infty}=0$, or $H\Z/p^{\infty}$,  or $\Sigma H\widehat{\Z}_p$. In the
first and second case, we have that $\Cell_A H\Z/p^{\infty}$ is $H\Z/p^{\infty}$ or $0$, respectively. In the third case, we can use
the triangle
$$
H\widehat{\Q}_p\longrightarrow H\Z/p^{\infty}\longrightarrow \Sigma H\widehat{\Z}_p\longrightarrow \Sigma H\widehat{\Q}_p,
$$
to conclude that $\Cell_A H\Z/p^{\infty}\cong H\widehat{\Q}_p$, where $H\widehat{\Q}_p$ denotes the $p$-adic rationals.

\end{document}